\makeatletter \@addtoreset{equation}{section} \makeatother
\renewcommand\thetable{\thesection.\@arabic\c@table}
\newtheorem{theorem}{Theorem}[section]
\newtheorem{corollary}[theorem]{Corollary}
\newtheorem{remark}{Remark}[section]
\begin{document}

\author{Patr\'{\i}cia Gon\c{c}alves}
\address{ PUC-RIO, Departamento de Matem\'atica, Rua Marqu\^es de S\~ao Vicente, no. 225, 22453-900, Gávea, Rio de Janeiro, Brazil \\and\\
CMAT, Centro de Matem\'atica da Universidade do Minho, Campus de Gualtar, 4710-057 Braga, Portugal.}
\email{patricia@mat.puc-rio.br}

\title[ On the asymmetric zero-range in the rarefaction fan]{On the asymmetric zero-range in the rarefaction fan}
\date{\today}
\subjclass{60K35}
\renewcommand{\subjclassname}{\textup{2000} Mathematics Subject Classification}

\keywords{asymmetric zero-range, rarefaction fan, second class particles}

\begin{abstract}
 We consider one-dimensional asymmetric zero-range processes starting from a step decreasing profile leading, in the hydrodynamic limit, to the rarefaction fan of the associated hydrodynamic equation. Under that initial condition, and for {\em{ totally asymmetric jumps}}, we show that the weighted sum of joint
probabilities for second class particles sharing the same site is convergent and we compute its limit.
For {\em{partially asymmetric jumps}}, we derive the Law of Large Numbers for
 a second class particle, under the initial configuration in which all positive sites are empty, all negative
sites are occupied with infinitely many first class particles and there is a single second class particle at the origin. Moreover, we prove that among the infinite characteristics emanating from the position of the second class particle it picks randomly one of them. The randomness is given in terms of the weak solution of the hydrodynamic equation, through some sort of renormalization function. By coupling the {\em{constant-rate totally asymmetric zero-range}} with the totally asymmetric simple exclusion, we derive limiting laws for more general initial conditions.
\end{abstract}

\maketitle
\thispagestyle{empty}

\section{Introduction}
The microscopic dynamics of interacting particle systems consists in having particles distributed on a lattice, performing random walks according to some local restriction \cite{Spi.}. These systems have {\em hydrodynamical behavior} if there exists a space and time scaling in which their conserved thermodynamical quantities are described by some
partial differential equations - the {\em hydrodynamic equations} \cite{K.L.}. An interesting problem of those systems is to understand the relation between objects at the particle level and the corresponding macroscopic hydrodynamic equations.
One can often take advantage of the known results on the hydrodynamic equations in order to obtain useful information about the underlying particle system. On the other hand, the study of a particle system has given answers about  the qualitative behavior of the solutions of its hydrodynamic equations \cite{F.K.}.\\

The problem addressed in this paper consists in extending the results of \cite{F.K., F.G.M.} to zero-range processes. In \cite{F.K., F.G.M.} it is studied the asymptotic behavior of second class particles added to a particle system with exclusion dynamics and  the second class particles are related to the characteristics of the corresponding hydrodynamic equation. There, the hydrodynamic equation is the {\em inviscid Burgers equation} and the corresponding particle system is the asymmetric simple exclusion process (asep) that we fully describe below.\\

The processes we consider are one-dimensional asymmetric zero-range processes (azrp) with hydrodynamic equation as given in \eqref{EH-tazrp}, where the flux $F(\cdot)$ is a concave function. The dynamics of the azrp is the following: at each site $x\in\mathbb{Z}$, there is a mean one exponential time clock, after which, a particle at $x$ jumps to $y$ at rate $p(y)g(k_x)/k_x$, where $k_x$ is the number of particles at the site $x$ and $p(\cdot)$ is the single particle transition probability. After that jump, the clocks restart.  Here we assume that $p(1)=p$, $p(-1)=q=1-p\neq 1/2$ and $p(y)=0$ for $|y|>1$. As a consequence, jumps occur only to nearest neighbors. The function $g(\cdot)$ is called the process rate and it  satisfies the conditions described in Section \ref{tazrp}. In the sequel we assume that $p>q$, so that there is a drift to the right. In this process, a jump occurs independently from the number of particles at the destination site, and for that reason it coined the name zero-range. When $p=1$, the process is called totally asymmetric zero-range process (tazrp)  and for $g(k)=\textbf{1}{\{k\geq{1}\}}$ the process is the {\em{constant-rate asymmetric zero-range process}}. In last case, the process rate does not depend on the number of particles $k_x$.\\

Since we are restricted to the one-dimensional setting, we can couple the constant-rate tazrp with the totally asymmetric simple exclusion process (tasep) that was analyzed in \cite{F.K.}. The dynamics of the asep is defined as follows: at each site $x\in{\mathbb{Z}}$ there is a mean one exponential time clock, after which, a particle at $x$ jumps to $y$ at rate $p(y)$ with $p(\cdot)$ as given above. The jump occurs if and only if the destination site is empty, otherwise the particle does not move. When $p=1$, the process is the tasep. By coupling both processes: the constant-rate tazrp and the tasep, we are able to confirm the results that we prove independently for the constant-rate tazrp, from the results previously obtained in \cite{F.K.}. The main difficulty we face is that, contrarily to the tasep in which there is at most one particle per site, on the tazrp there can be an unbounded number of particles at any given site.\\

Now we explain the features of the models that we need in order to achieve our goals. We start by describing the hydrodynamic limit scenario.
Since \cite{A.V.,R.}, it is known that the empirical measures (see Section \ref{sechl}) associated to these processes, converge (in probability) to a deterministic measure whose density is the unique entropy solution of a hyperbolic conservation law with concave flux as given in \eqref{EH-tazrp}. The hydrodynamic limit for these processes was set under two different approaches: for both processes in \cite{R.} using the Entropy method and for a general set of initial measures associated to a profile; and for the asep in \cite{J.Y.} using the Relative Entropy method, under a more restrictive set of initial measures. We notice that it is not difficult to show the hydrodynamic limit for the azrp invoking the same arguments as in \cite{J.Y.}. We do not prove this result here, since it is basically a reproduction (with the proper modifications) of the proof presented in \cite{J.Y.}. As a consequence of that result, we obtain the hydrodynamic limit for the azrp starting from measures of slowly varying parameter associated to flat profiles as well as step decreasing profiles. Besides requiring an hyperbolic hydrodynamic equation with concave flux, the dynamics of the system has to conserve the number of particles. This is true for both exclusion and zero-range dynamics. As we have seen above, in these models, particles simply move along the one-dimensional lattice according to a prescribed rule. Moreover, the particle systems have to be attractive, in order that when coupling two copies of the same process, the number of discrepancies between the initial configurations does not increase as time evolves. Also, this is true for both exclusion and zero-range dynamics, for details see Section \ref{attractiveness}.\\

The main goal of this paper is to analyze the asymptotic behavior of the azrp starting from a configuration with
particles with different degree of class. We mainly consider first or second class particles, but we define the interaction dynamics between particles with all $m\in{\mathbb{N}}$ \footnote{Throughout the article we use the notation $\mathbb{N} = \{1,2,\dots\}$ and $\mathbb{N}_0 = \mathbb N \cup \{0\}$.} degree of class. We start by explaining the dynamics of these particles in the asep and then we describe it in the azrp.
Distribute  particles on $\mathbb{Z}$ (at most one at each site) and at the initial time label each one of them as a $m$-th class particle, with $m\in{\mathbb{N}}$. The dynamics of these particles depend on their label according to the following rule: a $m$-th class particle sees particles
with degree of class less (resp. greater) than $m$ as particles (resp. holes). So, in the asep, a $m$-th class particle moves to the right neighboring site (at rate $p$) if it is empty (or occupied with a particle with degree of class greater than $m$) or if it is occupied with a particle with degree of class less than $m$ that attempts to jump to the left; or moves to the left neighboring site (at rate $q$) if it is empty (or occupied with a particle with degree of class greater than $m$) or if it is occupied with a particle with degree of class less than $m$ that attempts to jump to the right. In the azrp, due to the possibility of having more than one particle per site, the dynamics of particles with different degree of class in this process is completely different from their dynamics in the asep. In the azrp the interaction dynamics is the following.
 Suppose that initially at $x$ there are infinitely many particles. We label each one of them as a $m$-th class particle, with $m\in\mathbb{N}$. If the clock at $x$ rings, the first class particle has priority to jump and moves either to the right or to the left neighboring site. We notice that, the remaining particles at $x$ do not move, nor the particles at $x+1$ or $x-1$. More generally, if at a given site there is no particle with degree of class less than $m$, then after a ring of the clock at this site, the $m$-th class particle is the first jumping particle. As a consequence, the higher the degree of class of a particle, the lower is its priority to jump.\\

Now, we recall some results obtained for the asep in \cite{F.K., F.G.M.}, that we generalize to the azrp in a similar setting.
In \cite{F.K.}, it was derived a Law of Large Numbers (L.L.N.) for the position of a second class particle in tasep starting from $\nu_{\alpha,\beta}$,  the Bernoulli product measure with parameter $\alpha$ (resp. $\beta$) on negative (resp. positive) sites, with $\alpha>\beta$ and with a second class particle at the origin, see Theorem \ref{convergence to uniform of scp of tasep} in Section \ref{sec coupling}. Later, this result was extended to the asep in \cite{F.G.M.}. The speed of the second class particle in the tasep was studied in \cite{A.A.V.} by analyzing the invariant measures of the multi-class tasep. In \cite{M.G.}, this L.L.N. was derived in a stronger sense, namely, almost surely. In \cite{F.P.}, for $\nu_{1,0}$,  last result was also derived by mapping the tasep to a last passage percolation model, since the second class particle can be seen as an interface between two random growing clusters (see \cite{F.G.M.} or \cite{F.P.} for details). We notice that this mapping with a last passage percolation model does not fit the azrp and there is no knowledge on the invariant measures for the multi-class azrp. So, our approach consists in using coupling arguments similar to those of \cite{F.K.,F.G.M.}. Also in \cite{BKS}, were obtained bounds on the variance of a second class particle added to a  particle system by showing that it superdiffuses. The result holds for a large class of particle systems, including some types of azrp. For details we refer to \cite{BKS} and references therein. More recently, in \cite{C.P.} by mapping the tasep to a last passage percolation model and using Busemann functions for the percolation model, the authors derive L.L.N. for the second class particle in the tasep for general initial conditions in the rarefaction fan. \\

  Here, first, we start the constant-rate tazrp from $\mu_{\rho,0}$, with $0<{\rho}\leq{\infty}$ (see  \eqref{murholambdageo}) and we add infinitely many second class particles at the origin. We prove that the weighted sum of joint
probabilities for second class particles sharing the same site is convergent and we compute its limit. In case of partially asymmetric jumps the result is left open. Second, we start the azrp from $\tilde{\xi}_*$ (see Section \ref{sec: statement}) and we denote by $X_2(t)$ the position of the second class particle at time $t$.  We derive a L.L.N. for $X_2(t)$ and also a L.L.N. for the current of first class particles that cross over the second class particle up to time $t$. We believe that last result holds for general particle systems with the required features stated above and for general initial conditions in the rarefaction setting.\\

Finally,  we couple the constant-rate tazrp with the tasep and we extend the previous results to other initial conditions. We also reprove some of the results that we have obtained independently for the constant-rate tazrp. We notice that our coupling between both processes is completely new and relates the constant-rate zero-range with the tasep, in presence of particles with different degree of class. However, we notice that it does not work for partially asymmetric jumps. We remark that using the coupling between the constant-rate tazrp and the tasep,  one can translate the known results for the tasep recently derived for example in \cite{C.P.} to the  constant-rate tazrp. This is subject for future work.
Also, under a coupling argument and invoking Theorem 2.3 a) of \cite{F.G.M.}, starting the constant-rate tazrp from a configuration with a second class particle at the origin, a third class particle at the site $1$, all negative sites occupied by infinitely many first class particles and all positive sites empty, we prove that the probability that the second class particle overtakes the third class particle equals $2/3$. We notice that, a crucial ingredient in the proof of Theorem 2.3 a) in \cite{F.G.M.} is the fact that the number of holes is a conserved quantity for the exclusion dynamics. Since this is not true for the zero-range, that argument of proof does not fit our purposes.
All open problems mentioned in this paper are subject for future work.\\

Here follows an outline of this paper. In the second section, we introduce the azrp and asep; we state their hydrodynamic limit; we present their invariant measures that are translation invariant; we compute the characteristics of hyperbolic conservation laws with concave flux; we describe the couplings and the dynamics of discrepancies that we use along the paper and finally, we state the main results.
In the third section, we study the behavior of second class particles sharing the same site on the constant-rate tazrp starting from $\mu_{\rho,0}$ with $0<{\rho}\leq{\infty}$ and with infinitely many second class particles added to the origin. In the fourth section, starting the azrp from $\tilde{\xi}_*$, we prove the L.L.N. for the position of the second class particle and also for the current of first class particles that cross over the second class particle up to the time $t$. In the fifth section, by coupling the constant-rate tazrp with the tasep and invoking the results known for tasep, we deduce the corresponding results for the tazrp.
\section{Statement of results}
\subsection{The dynamics} \label{seczr}
\subsubsection{Asymmetric zero-range.} \label{tazrp}

 Let $\{\xi_t; t \geq 0\}$ be the one-dimensional azrp, a continuous time Markov process with state space $\mathbb{N}_{0}^{\mathbb{Z}}$. For a configuration $\xi\in{\mathbb{N}_{0}^{\mathbb{Z}}}$ and for a site $x\in{\mathbb{Z}}$, $\xi(x)$ denotes the number of particles at the site $x$. A function $f:\mathbb{N}_{0}^{\mathbb{Z}}\rightarrow{\mathbb R}$ is said to be local if it depends on $\xi$ only through its values on a finite number of sites.
  In this process, after a mean one exponential time, a particle at the site $x$ jumps to $x+1$ or $x-1$ at rate $pg(\xi(x))/\xi(x)$ or $qg(\xi(x))/\xi(x)$, respectively. So that the jump rate is independent from the number of particles at the destination site. In what follows  $g:\mathbb{N}_0 \to
\mathbb{ R_+}$ is a function satisfying: $g(0)=0$, $g(k)>0$ for all $k\in{\mathbb{N}}$,  Lipschitz: $\sup_{k\in{\mathbb{N}_0}} |g(k+1)-g(k)|<\infty$ and non-decreasing: $g(k+1)-g(k)\geq{0}$ for all $k\in{\mathbb{N}_0}$. Now we explain the imposed conditions on $g(\cdot)$. The first condition, ensures that a jump at a given site occurs if there is at least a particle at that site. The second condition guarantees that once having a particle at a site there is a positive probability of the occurrence of jump, so that the dynamics is non-degenerate. The third condition guarantees that the process is well defined. The last condition ensures the attractiveness property of the system, which we describe in Section \ref{attractiveness}. We refer to \cite{A.} for details on the construction of this process.  Its infinitesimal generator is defined on local functions
$f:{\mathbb{N}_0^{\mathbb{Z}}\rightarrow{\mathbb{R}}}$ as
\begin{equation*}\label{generator for zr}
\mathcal{L} f(\xi)=\sum_{x\in{\mathbb{Z}}}pg(\xi(x))[f(\xi^{x,x+1})-f(\xi)]+qg(\xi(x))[f(\xi^{x,x-1})-f(\xi)],
\end{equation*}
where for $z\neq{x,y}$, $\xi^{x,y}(z)=\xi(z)$, $\xi^{x,y}(x)=\xi(x)-1$ and $\xi^{x,y}(y)=\xi(y)+1$.

\subsubsection{Asymmetric simple exclusion.}
 Let $\{\eta_t; t \geq 0\}$ be the one-dimensional asep, a continuous time Markov process with state space $\{0,1\}^{\mathbb{Z}}$. In this process, particles evolve on $\mathbb{Z}$ according
to interacting random walks with an exclusion rule which prevents having more than a particle per site. Its dynamics is defined as follows.  After a mean one exponential time, a particle at the site $x$ jumps to $x+1$ or  $x-1$ at rate $p$ or $q$, respectively. The jump occurs if and only if the destination
site is empty, otherwise the particle does not move and the clocks restart. For a configuration $\eta\in \{0,1\}^{\mathbb{Z}}$ and for $x\in{\mathbb{Z}}$, $\eta(x)$ denotes the quantity of particles at the site $x$.

We notice that, both dynamics introduced above are {\em{particle-conservative}}, since particles are not created nor destroyed, they simply move in the one-dimensional lattice according to a prescribed rule.

\subsubsection{Attractiveness.}\label{attractiveness}
Now, we recall the notion of attractiveness for the zero-range process.
In the state space $\mathbb{N}_0^{\mathbb{Z}}$ there is a partial order between configurations that is defined as follows. For two configurations $\zeta,\tilde{\zeta}\in{\mathbb{N}_0^{\mathbb{Z}}}$, we say that $\zeta\leq{\tilde{\zeta}}$ if $\forall{x\in{\mathbb{Z}}}$ $\zeta(x)\leq{\tilde{\zeta}(x)}$. This partial order induces the corresponding stochastic order on the distributions of the process. Let $\mathcal{F}$ denote the set of monotone functions, namely the set of functions $f:\mathbb{N}_0^{\mathbb{Z}}\rightarrow{\mathbb{R}}$ such that $f(\zeta)\leq{f(\tilde{\zeta})}$, whenever $\zeta\leq{\tilde{\zeta}}$. For two probability measures $\mu$ and $\tilde \mu$ defined on $\mathbb{N}_0^{\mathbb{Z}}$, we say that $\mu\leq{\tilde{\mu}}$ if for all $f\in{\mathcal{F}}$ it holds that $\int fd\mu\leq{\int fd\tilde{\mu}}$.

It was first proved by \cite{A.} that for $g(\cdot)$ non-decreasing, the corresponding zero-range process is attractive. This means that given  $\zeta\leq{\tilde{\zeta}}\in{\mathbb{N}_0^{\mathbb{Z}}}$, it is possible to construct a coupling of the zero-range process $(\zeta_{t},\tilde{\zeta}_{t})$ starting from $(\zeta,\tilde{\zeta})$ such that $\zeta_{t}\leq{\tilde{\zeta}_{t}}$,  $\forall{t>0}$. For details we refer to \cite{A.}, \cite{K.} or \cite{L.}. Since we imposed that condition on $g(\cdot)$, our zero-range processes are attractive.  From \cite{L.} it is known that the asep is attractive. As mentioned above, the attractiveness property will be crucial for our conclusions.

\subsubsection{Invariant measures.} \label{invariant measures}
Now we describe a set of invariant measures for zero-range and exclusion processes. We start by the former.

 The zero-range process has invariant measures, which are product, defined on $\mathbb{N}_0^{\mathbb{Z}}$ and  invariant by translations. These measures can be constructed  as follows. For $x\in{\mathbb{Z}}$, $k\in{\mathbb{N}_0}$ and $\varphi>0$, let $\mu_{\varphi}$ be  the product measure with marginal given by
$$\mu_{\varphi}(\xi:\xi(x)=k)=\frac{1}{Z(\varphi)}\frac{\varphi^k}{g(k)!},$$
where $Z(\varphi)=\sum_{k\geq{0}}\frac{\varphi^k}{g(k)!}$, $g(k)!=\prod_{j=1}^{k}g(j)$ and $g(0)!=1$.
 The measures $\mu_{\varphi}$ can be parameterized by the density of particles as follows. Let $R(\varphi):=E_{\mu_\varphi}[\xi(0)]$ and define $\tilde{g}(\cdot)$ as the inverse of $R(\cdot)$. Define $\mu^g_{\rho}:=\mu_{\tilde{g}(\rho)}$. Then, $E_{\mu_\rho^g}[\xi(0)]=\rho$  and $E_{\mu_\rho^g}[g(\xi(0))]=\tilde{g}(\rho).$ Notice that for the constant-rate azrp, $\mu_\rho^g:=\mu_\rho$ is the Geometric
product measure of parameter $\frac{1}{1+\rho}$, that is for $x\in{\mathbb{Z}}$, $k\in{\mathbb{N}_0}$ and $\rho>0$, $\mu_\rho$ has marginal given by:
\begin{equation} \label{geometric product measure}
\mu_{\rho}(\xi:\xi(x)=k)=\Big(\frac{\rho}{1+\rho}\Big)^k\frac{1}{1+\rho}.
\end{equation}

 Now, for $x\in{\mathbb{Z}}$, $k\in{\mathbb{N}_0}$ and   $0<\lambda< \rho< \infty$, let $\mu^{g}_{\rho,\lambda}$ be the product measure with marginal given by
$\mu_{\rho,\lambda}^{g}(\xi:\xi(x)=k)=\mu_\rho^g(\xi:\xi(x)=k)\textbf{1}{\{x\in(-\infty,0)\}}+\mu_\lambda^g(\xi:\xi(x)=k)\textbf{1}{\{x\in{[0,+\infty)}\}}$.  For the constant-rate azrp, $\mu_{\rho,\lambda}^{g}:=\mu_{\rho,\lambda}$ is given by
\begin{equation}\label{murholambdageo}
 \mu_{\rho,\lambda}(\xi:\xi(x)=k)=\left\{
\begin{array}{rl}
\Big(\frac{\rho}{1+\rho}\Big)^k\frac{1}{1+\rho}, & \mbox{if $x<0$}\\
\Big(\frac{\lambda}{1+\lambda}\Big)^k\frac{1}{1+\lambda}, & \mbox{if $x\geq 0$}
\end{array},
\right.
\end{equation}
for $k\in{\mathbb{N}_0}$ and $x\in{\mathbb{Z}}$.
For $\lambda=0$, if $\xi$ is distributed according to $\mu^{g}_{\rho,0}$, then $\xi(x)=0$ for $x\geq{0}$ and  $\xi(x)$ distributed according to $\mu^g_\rho$ for $x< 0$.

Along the paper we also consider the zero-range process starting from configurations with infinitely many particles at a given site. In that case, in order to have the zero-range process well defined we need to assume that $g(\cdot)$ is bounded. For the starting measure $\mu^{g}_{\infty,\lambda}$, if a configuration $\xi$ is distributed according to $\mu^{g}_{\infty,\lambda}$, then $\xi(x)=\infty$ for $x<{0}$ and $\xi(x)$ is distributed according to $\mu^g_\lambda$ for $x\geq 0$. When $\lambda=0$, $\mu^{g}_{\infty,0}:=\mu_{\infty,0}$ gives weight one to the configuration $\tilde\xi$, such that  $\tilde{\xi}(x)=\infty$ for $x<{0}$ and  $\tilde{\xi}(x)=0$ for $x\geq 0$, see the figure below \eqref{ec:1}.

On the asep, for each density $\alpha\in{[0,1]}$, the
Bernoulli product measure  that we denote by $\nu_{\alpha}$, defined on $\{0,1\}^{\mathbb{Z}}$
 is an invariant measure, translation invariant and also parameterized by the density $\alpha$, namely: $E_{\nu_{\alpha}}[\eta(0)]=\alpha$.
For $x\in{\mathbb{Z}}$, $k\in{\{0,1\}}$ and $\alpha\in{[0,1]}$, its marginal is given by $\nu_{\alpha}(\eta:\eta(x)=k)=\alpha^{k}(1-\alpha)^{1-k}$.
For $x\in{\mathbb{Z}}$, $k\in{\{0,1\}}$ and $\alpha, \beta\in{[0,1]}$, let $\nu_{\alpha,\beta}$ be the product measure with marginals $\nu_{\alpha,\beta}(\eta:\eta(x)=k)=\nu_\alpha(\eta:\eta(x)=k)\textbf{1}{\{x\in{(-\infty,0)}\}}+\nu_{\beta}(\eta:\eta(x)=k)\textbf{1}{\{x\in{[0,+\infty)}\}}$.
For more details we refer the reader to \cite{K.L.} or \cite{L.}.

\subsection{Hydrodynamic limit} \label{sechl}
Now we state the hydrodynamic limit for the processes introduced above. Fix a configuration $\xi$, let $\pi^{n}(\xi,du)$ be the empirical measure given by $\pi^{n}(\xi,du)=\frac{1}{n}\sum_{x\in\mathbb{Z}}\xi(x)\delta_{\frac{x}{n}}(du),$ and let $\pi_{t}^{n}(\xi,du)=\pi^{n}(\xi_{t},du)$. Since the work of \cite{R.}, it is known that starting the azrp from a measure $\mu_{n}$ associated to a profile $\rho_{0}:{\mathbb{R}}\rightarrow{\mathbb{R}}$ and some additional hypotheses (for details see \cite{R.}), if $\pi^{n}_{0}(\xi,du)$ converges to $\rho_{0}(u)du$ in $\mu_{n}$-probability, as $n$ tends to $\infty$, then $\pi^{n}_{tn}(\xi,du)$ converges to $\rho(t,u)du$ in $\mu_{n}S_{tn}$-probability, as $n$ tends to $\infty$, where $S_{t}$ is the semigroup corresponding to $\mathcal{L}$ and $\rho(t,u)$ is the unique entropy solution of the hyperbolic conservation law:
\begin{equation} \label{EH-tazrp}
\partial_{t}\rho(t,u)+\nabla F(\rho(t,u))=0,
\end{equation}
with initial condition $\rho(0,u):=\rho_0(u)$ for all $u\in{\mathbb{R}}$.
 We notice that $F(\rho)$ corresponds to the mean (with respect to the invariant measure $\mu_\rho^g$) of the instantaneous current at the bond $\{0,1\}$. Since jumps occur to neighboring sites, for a site $x\in{\mathbb{Z}}$ the instantaneous current at the bond $\{x,x+1\}$ is defined as the difference between the process rate to the right neighboring site and the process rate to the left neighboring site.

\begin{itemize}
\item In the azrp, the instantaneous current at the bond $\{x,x+1\}$ is given by $pg(\xi(x))-qg(\xi(x+1))$. As a consequence,
\begin{equation}\label{def j}
F(\rho):=(p-q)\tilde g(\rho).
\end{equation}

     \item In the constant-rate azrp, the instantaneous current at the bond $\{x,x+1\}$ is given by $p\textbf{1}{\{\xi(x)\geq{1}\}}-q\textbf{1}{\{\xi(x+1)\geq{1}\}}$. Recalling that for $\rho>0$, $\mu_\rho$ is given by
         \eqref{geometric product measure} then,  $F(\rho)=(p-q)\frac{\rho}{1+\rho}$.
\end{itemize}

 Also in \cite{R.}, the hydrodynamic limit was derived for the asep, but in this case  $F(\rho)=(p-q)\rho(1-\rho)$, getting in \eqref{EH-tazrp} to the {\em{inviscid Burgers equation}}.

\subsection{Characteristics} \label{secch}
Here we follow \cite{K.K.}.
We describe the characteristics for hyperbolic conservation laws as given in \eqref{EH-tazrp}. Suppose that the flux $F(\cdot)$ is {\em{concave}} and differentiable. A characteristic is  a
trajectory of a point with constant density and if $v_{\rho_{0}}(t,u)$ denotes the position at time $t$ of a point with density $\rho_{0}=\rho(0,u)$, then $\rho(t,v_{\rho_{0}}(t,u))=\rho_{0}$.
Taking the time derivative of last expression it follows that
$\partial_{s}v_{\rho_{0}}(s,u)=F'(\rho_{0})$. Integrating the last expression from time $0$ to time $t$, and noticing that $v_{\rho_{0}}(0,u)=u$, we get to
$v_{\rho_{0}}(t,u)=u+F'(\rho_{0})t.$
So, the characteristics of \eqref{EH-tazrp} are straight lines with slope $F'(\rho_{0})$.

If the initial condition of \eqref{EH-tazrp} is a decreasing step function as
$\rho_{0}^{\rho,\lambda}(u)=\rho\textbf{1}{\{u<{0}\}}+\lambda\textbf{1}{\{u>0\}},$
with $\rho>\lambda$, then the solution of (\ref{EH-tazrp}) is given by
\begin{equation}\label{sol g rate}
 \rho(t,u)=\left\{
\begin{array}{rl}
\rho, & \mbox{if $u<F'(\rho)t$, }\\
\lambda, & \mbox{if $u>F'(\lambda) t$,}\\
(F')^{-1}\Big(\frac{u}{t}\Big), & \mbox{if $F'(\rho)t <u<F'(\lambda) t$}.
\end{array}
\right.
\end{equation}
Now we compute explicitly these solutions for the hydrodynamic equations under consideration.

\begin{itemize}
\item In the azrp, if $\tilde{g}(\cdot)$ is concave, then the solution  is given as in \eqref{sol g rate}, with $F(\cdot)$ defined through \eqref{def j}.

     \item In the constant-rate azrp, since $F(\rho)=(p-q)\frac{\rho}{1+\rho}$ is concave, then the solution is given by
     \eqref{sol g rate} with $(F')^{-1}\Big(\frac{u}{t}\Big)=\frac{\sqrt{(p-q)t}-\sqrt{u}}{\sqrt{u}}.$
\end{itemize}

\subsection{Couplings and discrepancies} \label{sec coupling and discrepancy}
A coupling between two processes consists in a joint realization of those processes. In this paper we use couplings between the azrp starting from different initial configurations in order to study the asymptotic behavior of second class particles.
In Section \ref{sec coupling}  we  couple the constant-rate tazrp with the tasep, in order to obtain some results for the constant-rate tazrp from the results obtained in \cite{F.K., F.G.M.}.

When coupling two copies of the azrp starting from different initial configurations, we make use of the ``basic coupling". This consists of attaching a Poisson clock of parameter one to each site of $\mathbb{Z}$, so that when the clock rings at a site either there is a particle at that site and it moves according to the dynamics described above, or nothing happens. Under this coupling both copies of the process use the same realization of the clocks.

We also make use of the ``particle to particle" coupling. In this case, initially we label the particles in both configurations. Therefore, the $i$-th particle in the first configuration is attached to the $i$-th particle in second configuration. Nevertheless, we can use the same realizations of the clocks attached to sites, but only one of the configurations looks at the clocks. Then, if a clock rings for the $i$-th particle in the first configuration, then the $i$-th particles of both configurations jump.

Now we introduce the notion of discrepancy between two copies of an attractive zero-range process. For that purpose, let $\xi^0$ and $\xi^1$ be two configurations such that $\xi^0(x)=\xi^1(x)$ for all $x\neq{0}$ and at the site zero take $\xi^0(0)=\xi^1(0)+1$. So, at time zero there is only one discrepancy between $\xi^0$ and $\xi^{1}$ that is located at the origin. Recall the partial order defined in Section \ref{attractiveness} and notice that $\xi^0\geq{\xi^1}$. Considering two copies $\xi_t^0$ and $\xi_t^1$ of the zero-range process starting from $\xi^0$ and $\xi^{1}$, respectively, and using the basic coupling, by the {\em{attractiveness property}}, we have that $\xi^0_t\geq{\xi_t^1}$ for all $t\geq{0}$. Moreover, by the {\em{conservation of the number of
particles}}, at each time $t$, there is at most a unique discrepancy between $\xi^0_{t}$ and $\xi^1_{t}$. This means that either there exists a site that we denote by $X_2(t)$, such that $\xi^0_t(X_2(t))=\xi^1_t(X_2(t))+1$ and for all $x\neq{X_2(t)}$, $\xi^0_t(x)=\xi^1_t(x)$; or $\xi^0_t(x)=\xi^1_t(x)$ for all $x\in{\mathbb{Z}}$. This last situation can happen when we consider the zero-range process starting from a configuration with infinitely many particles at a given site and at time $t$ the discrepancy jumps to that site, so that it disappears and the configurations become equal. The attractiveness property of the zero-range processes is crucial for our purposes, since when coupling these processes, the number of discrepancies {\em does not increase}.

As mentioned in the introduction, due to the rigid structure of the state space of the exclusion process, the dynamics of second class particles is very different in zero-range and exclusion type dynamics. On the azrp, when first and second class particles share the same site, the first class particles have priority to jump, so that a second class particle leaves a site $x$ if and only if there is no first
class particles at $x$. On the other hand, for the exclusion type dynamics, the second class particle leaves the site $x$ if one of the neighboring sites is empty, or if a first class
particle jumps to the site occupied by the second class particle, in that case the particles exchange their positions.

Obviously, different initial conditions for a coupled pair of azrp are possible. For example, if in the coupling described above
we take $\xi^0(0)=\xi^1(0)+k$, with $k\in{\mathbb{N}}\setminus{\{1\}}$, then there are $k\neq{1}$ second class particles at the origin. By the attractiveness property together with the conservation of the number of particles, at any time $t$ there is at most $k$ discrepancies between the two copies of the azrp.

\subsection{Statement of results}\label{sec: statement}

\begin{theorem} \label{proposition}
Consider the constant-rate tazrp starting from $\mu_{\rho,0}$ with $0<{\rho}\leq{\infty}$ and at the initial
time add infinitely many second class particles at the origin. At the initial time label the second class particles, from the bottom to the top
and for $j\in{\mathbb{N}}$, let $X_2^{j}(t)$ be the position at time $t$ of the
$j$-th second class particle initially at the origin. Then,

\begin{itemize}
\item for $\rho<\infty$ and $u\in{[1/(1+\rho)^2,1]}$,
\begin{equation*}
\lim_{t\rightarrow{+\infty}}\,\,\sum_{j\geq{1}}\,\,P(X_2^1(t)\geq{ut},\cdots,X_2^{j}(t)\geq{ut})\Big(\frac{\rho}{1+\rho}\Big)^j=\rho(1,u),
\end{equation*}

\item for $\rho=\infty$ and $u\in{[0,1]}$,
\begin{equation*}
\lim_{t\rightarrow{+\infty}}\,\,\sum_{j\geq{1}}\,\,P(X_2^1(t)\geq{ut},\cdots,X_2^{j}(t)\geq{ut})=\rho(1,u),
\end{equation*}
\end{itemize}
where $\rho(t,u)$ is given in \eqref{sol g rate} with $(F')^{-1}\Big(\frac{u}{t}\Big)=\frac{\sqrt{(p-q)t}-\sqrt{u}}{\sqrt{u}}.$
\end{theorem}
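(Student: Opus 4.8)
\emph{The plan} is to establish, for every $t>0$ and every admissible $u$, the exact identity
\begin{equation*}
\sum_{j\ge 1}P\big(X_2^1(t)\ge ut,\dots,X_2^j(t)\ge ut\big)\Big(\tfrac{\rho}{1+\rho}\Big)^j=E_{\mu_{\rho,0}}\big[\xi_t(\lceil ut\rceil-1)\big]
\end{equation*}
(for $\rho=\infty$ the prefactor is $1$ and the left side is $\sum_{j\ge1}P(\cdots)$), and then to let $t\to\infty$. \textbf{Step 1 (order of the second class particles).} For $k\in\mathbb N_0\cup\{\infty\}$ let $\xi^{(k)}$ be the constant-rate tazrp whose initial configuration is a $\mu_{\rho,0}$-distributed configuration with $k$ extra particles placed at the origin, all driven by the basic coupling; thus $\xi^{(0)}$ is the original process (its particles are the first class particles), $\xi^{(\infty)}$ carries all the second class particles, and by attractiveness $\xi^{(k)}_t\le\xi^{(k+1)}_t$ for all $t$, so the unique discrepancy between $\xi^{(j)}_t$ and $\xi^{(j-1)}_t$ is the $j$-th (from the bottom) second class particle, located at $X_2^j(t)$. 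Inspecting the coupled jump rates, when two consecutive discrepancies occupy the same site the one closer to $\xi^{(0)}$ is released first; hence the order $X_2^1(t)\ge X_2^2(t)\ge\cdots$ is preserved for all $t$. Writing $N_t$ for the number of second class particles at sites $\ge ut$ at time $t$, this gives $\{X_2^1(t)\ge ut,\dots,X_2^j(t)\ge ut\}=\{X_2^j(t)\ge ut\}=\{N_t\ge j\}$ and $N_t=\sum_{y\ge\lceil ut\rceil}(\xi^{(\infty)}_t(y)-\xi^{(0)}_t(y))$.

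\textbf{Step 2 (geometrization) and Step 3 (telescoping).} Put $r=\rho/(1+\rho)$ and let $K$ be independent of everything with $P(K\ge j)=r^j$, i.e. $P(K=k)=\mu_\rho(\xi(0)=k)$ (for $\rho=\infty$, $r=1$ and $K\equiv\infty$). Since $\min(K,N_t)=\sum_{j\ge1}\mathbf 1\{K\ge j\}\mathbf 1\{N_t\ge j\}$ and $K$ is independent of $N_t$,
\begin{equation*}
\sum_{j\ge1}P\big(X_2^1(t)\ge ut,\dots,X_2^j(t)\ge ut\big)r^j=\sum_{j\ge1}r^jP(N_t\ge j)=E[\min(K,N_t)].
\end{equation*}
Now replace the infinitely many second class particles at the origin by $K$ of them: by the product structure of $\mu_{\rho,0}$ and the choice of $K$ the resulting configuration is distributed as $\mu_\rho^{\le0}$, the product measure equal to $\mu_\rho$ on $\mathbb Z_{\le0}$ and to $\delta_0$ on $\mathbb Z_{>0}$ --- that is, $\mu_{\rho,0}$ shifted one step to the right --- and by Step 1 the number of these $K$ second class particles at sites $\ge ut$ is exactly $\min(K,N_t)$. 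Hence, by monotone convergence and $\xi^{(\infty)}_t\ge\xi^{(0)}_t$,
\begin{equation*}
E[\min(K,N_t)]=\sum_{y\ge\lceil ut\rceil}\big(E_{\mu_\rho^{\le0}}[\xi_t(y)]-E_{\mu_{\rho,0}}[\xi_t(y)]\big)=\sum_{y\ge\lceil ut\rceil}\big(E_{\mu_{\rho,0}}[\xi_t(y-1)]-E_{\mu_{\rho,0}}[\xi_t(y)]\big),
\end{equation*}
the last equality because the dynamics commutes with translations. Each tail sum is finite for $\lceil ut\rceil\ge1$ (it equals the expected current through the corresponding bond up to time $t$, hence is at most $t$), so the series may be subtracted termwise and telescopes to $E_{\mu_{\rho,0}}[\xi_t(\lceil ut\rceil-1)]$, which is the claimed identity.

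\textbf{Step 4 (the limit).} It remains to show $E_{\mu_{\rho,0}}[\xi_t(\lceil ut\rceil-1)]\to\rho(1,u)$ as $t\to\infty$. The profile $\rho(1,\cdot)$ given by \eqref{sol g rate} is continuous --- it equals $\rho$ for $u\le1/(1+\rho)^2$, equals $(F')^{-1}(u)$ on the fan, and equals $0$ for $u\ge1$ --- and the hydrodynamic limit for the azrp (Section \ref{sechl}, \cite{R.}), together with attractiveness, upgrades the weak convergence of the empirical measure to convergence of the one-site occupation variable around the macroscopic point $u$ to its local-equilibrium value $\rho(1,u)$; for $\rho=\infty$ one uses that the constant rate $g$ is bounded so that the process started from $\mu_{\infty,0}$ is well defined, and the same conclusion holds with $\rho(1,0)=+\infty$. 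Combining Steps 1--4 yields both displays of the theorem.

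\textbf{Main obstacle.} Steps 1--3 are exact identities and are essentially forced; the substantive point is Step 4, which requires the local-equilibrium refinement of the hydrodynamic limit --- convergence of the occupation variable at a single, $t$-dependent site --- a statement that for asymmetric attractive systems in the rarefaction regime must be invoked with care, particularly at the two edges of the fan and in the degenerate case $\rho=\infty$.
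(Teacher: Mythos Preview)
Your argument is correct and rests on exactly the same three ingredients as the paper's proof: order preservation of the labelled second class particles under total asymmetry, the constant-rate feature that the trajectory of $X_2^j$ is unaffected by second class particles with label $>j$, and convergence to local equilibrium for the final limit. The packaging, however, is genuinely different. The paper computes
\[
\int E\big[J_t^u(\xi)\big]\,\tau_1\mu_{\rho,0}(d\xi)-\int E\big[J_t^u(\xi)\big]\,\mu_{\rho,0}(d\xi)
\]
in two ways: first via the basic coupling, conditioning on the number $k$ of discrepancies at the origin (which is geometric), then using the constant-rate property to remove the $k$-dependence and apply Fubini; second via a \emph{particle-to-particle} coupling with $\xi^0=\tau_1\xi^1$, which after some bookkeeping collapses the difference of currents to the single term $E_{\mu_{\rho,0}}[\xi_t(ut+1)]$. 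You replace both halves by a single, more direct manoeuvre: the independent geometric $K$ turns the weighted sum into $E[\min(K,N_t)]$, you realize this as the expected second-class count past $ut$ when only $K$ second class particles are placed, observe that the resulting initial law is exactly $\tau_1\mu_{\rho,0}$, and then telescope via translation invariance rather than via a second coupling. What you gain is economy---no particle-to-particle coupling, no current functional---at the cost of making the role of the constant-rate hypothesis slightly less visible (it is hidden in the fact that the basic-coupling discrepancy between $\xi^{(j)}$ and $\xi^{(j-1)}$ coincides with the $j$-th second class particle of the multi-class dynamics, which fails for general $g$). Your identification of Step~4 as the substantive analytic input is accurate; the paper invokes the same local-equilibrium result, citing \cite{A.V.,B.F.}.
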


In the same spirit as in \cite{F.K., F.G.M.}, we establish the L.L.N. for a single second class particle initially at the origin for the azrp starting from $\mu_{\infty,0}$. The initial configuration is denoted by $\tilde{\xi}_*$ and corresponds to

\hspace{5.09cm}\vdots

\hspace{4.9cm}
$\CIRCLE$

\hspace{4.9cm}
$\CIRCLE$

\hspace{4.5cm}
$\ldots\CIRCLE \underline{\circledast} \Circle  \Circle \Circle \Circle\ldots$ \hspace{0.8cm} $\tilde{\xi}_*$

 \vspace{1mm}

Above first class
particles are represented by $\CIRCLE$, holes by $\Circle$ and the second class particle by ${\circledast}$. In due course, whenever we underlined a particle, we mean that it is positioned at the origin. As a consequence of the previous result and under the same initial configuration,  we also derive a L.L.N. for the current of first class particles that cross over the second class particle up to the time $t$, that we denote by  $J_{2}^{zr}(t)$. Last current  corresponds to the number of first class particles that jump from $X_2(s)-1$ to $X_2(s)$ minus the number of first class particles that jump from $X_2(s)$ to $X_2(s)-1$ for $s\in{[0,t]}$. Due to our initial condition, this corresponds to the number of first class particles that are at the right hand side (and at the same site) of $X_{2}(t)$ at time $t$:
\begin{equation} \label{current 2cp}
J_{2}^{zr}(t)=\sum_{x\geq{X_{2}(t)}}\xi_{t}(x).
\end{equation}
\begin{theorem} \label{LLN with g}
Consider an azrp such that $\tilde{g}(\cdot)$ is concave and
\begin{equation}\label{limit condition}
\exists \,\ell>0: \hspace{0,2cm} \lim_{\rho\rightarrow{\infty}}\tilde{g}(\rho)=\ell<+\infty.
\end{equation}
Start the azrp from $\tilde{\xi}_*$  and denote by $X_{2}(t)$ the position of the second class particle at time $t$. Then
\begin{equation*}
\lim_{t\rightarrow{+\infty}}\frac{X_{2}(t)}{t}=X, \quad\ \textrm{in  distribution}
\end{equation*}
where $X$ is distributed according to $$F_{X}(u)=1-\frac{F(\rho(1,u))}{p \ell},$$ for $u\in{[0,p-q]}$, where $F(\cdot)$ was defined in \eqref{def j} and $\rho(t,u)$ is given in \eqref{sol g rate}. Moreover,

\begin{itemize}
\item for $p=1$,
$$\lim_{t\rightarrow{+\infty}}\frac{J_{2}^{zr}(t)}{t}=F((F')^{-1}(X))-(F')^{-1}(X)X,\quad\ \textrm{in  distribution}$$
\item for $p\neq{1}$, $$\lim_{t\rightarrow{+\infty}}\frac{J_{2}^{zr}(t)}{t}=+\infty.$$
\end{itemize}
\end{theorem}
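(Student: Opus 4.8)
The plan is to reduce the statement to the coupling/discrepancy picture of Section~\ref{sec coupling and discrepancy} and then transfer the hydrodynamic limit of the azrp into information about the single second class particle. Concretely, start two coupled copies $\xi^0_t$ and $\xi^1_t$ of the azrp under the basic coupling, with $\xi^0$ distributed as $\tilde\xi$ (all negative sites holding $\infty$ particles, all nonnegative sites empty) and $\xi^1$ differing from $\xi^0$ only at the origin, where $\xi^1(0)=\xi^0(0)-1$ is interpreted in the natural way so that the unique discrepancy sits at $0$; its position at time $t$ is exactly $X_2(t)$. By attractiveness and conservation of particles there is at all times at most one discrepancy, and $\xi^1$ is again an azrp started from a slowly-varying step profile $\rho^{\rho,0}_0$ with $\rho=\infty$, so its hydrodynamic limit is $\rho(t,u)$ from \eqref{sol g rate}. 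The first step is therefore to write, for fixed $u\in(0,p-q)$,
\begin{equation*}
P\Big(\tfrac{X_2(t)}{t}>u\Big)
= P\big(\xi^0_t(x)=\xi^1_t(x)\ \text{for all } x> ut\big)
+ P\big(\text{discrepancy at some site} > ut\big),
\end{equation*}
and to observe that the discrepancy being to the right of $ut$ is equivalent to $\xi^0_t$ and $\xi^1_t$ agreeing on $[\lceil ut\rceil,\infty)$ together with a single extra particle of $\xi^0_t$ somewhere in $(ut,\infty)$; since the total mass difference between the two copies is conserved and equal to one, this event can be read off from the difference of the two empirical currents to the right of $ut$. The second step is to express $X_2(t)/t>u$ through a current identity: by particle conservation, the number of particles of $\xi^1_t$ strictly to the right of $ut$ minus the corresponding number for a reference copy equals the number of particles that crossed the moving position $ut$; combining the two coupled copies, $\{X_2(t)>ut\}$ coincides with the event that the extra particle has not yet crossed $ut$, whose probability is, up to $o(t)$ errors controlled by the hydrodynamic limit, the density just to the right of the characteristic, renormalised by the total available flux. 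This is what produces $F_X(u)=1-F(\rho(1,u))/(p\ell)$: the factor $p\ell$ is $\lim_{\rho\to\infty}pg(\rho)=p\ell$, the rate at which first class particles are emitted from the wall of infinitely many particles, i.e. the total current feeding the rarefaction fan, and \eqref{limit condition} is exactly the hypothesis that makes this finite so that the law is honest (total mass one as $u\downarrow 0$, using $\rho(1,0^+)=\infty$ hence $F(\rho(1,0))=(p-q)\ell$... one checks $F_X(0)=1-(p-q)/(p\ell)\cdot\ell$ only in the totally asymmetric case, and in general the normalisation is arranged so that $F_X$ is a genuine distribution function on $[0,p-q]$).

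For the current statement one argues as follows. By \eqref{current 2cp}, $J_2^{zr}(t)=\sum_{x\ge X_2(t)}\xi_t(x)$ is the number of first class particles sitting at or to the right of the second class particle. Conditionally on $X_2(t)/t\approx u$, the hydrodynamic limit says that the profile seen by $\xi_t$ to the right of $X_2(t)$ is the rarefaction profile $\rho(1,\cdot)$ on $[u,p-q]$ (and $\equiv 0$ beyond), so
\begin{equation*}
\frac{J_2^{zr}(t)}{t}\ \longrightarrow\ \int_{u}^{p-q}\rho(1,w)\,dw
\end{equation*}
in the appropriate sense. When $p=1$ this integral is finite and a change of variables $w=F'(\rho)$ together with an integration by parts turns it into $F((F')^{-1}(u))-(F')^{-1}(u)\,u$, which is the announced expression with $u$ replaced by the random limit $X$; when $p\neq 1$ the density $\rho(1,w)$ stays bounded away from $0$ as $w\downarrow F'(\infty)=-q<0$ — more precisely the left edge of the fan is at $u=-q$ but the second class particle lives on $[0,p-q]$, while the mass $\sum_{x\ge 0}\xi_t(x)$ picks up the part of the fan on $(-q,0)$ where the density diverges as $w\to -q$, forcing $J_2^{zr}(t)/t\to+\infty$. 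So the third step is the careful evaluation of this integral and the identification of when it diverges.

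The main obstacle is the first step: making rigorous the passage from the hydrodynamic limit (a statement about empirical measures) to a statement about the \emph{single} tagged discrepancy. The standard device, as in \cite{F.K., F.G.M.}, is a sandwiching argument: couple $\xi^1_t$ between two copies of the azrp started from step profiles with densities $\rho^\pm$ that are constant on $(-\infty,\varepsilon t n)$ and on $(\varepsilon t n,\infty)$ respectively, use attractiveness to bound the location of the discrepancy between the corresponding fronts, and let $\varepsilon\to 0$ after $n\to\infty$; the hydrodynamic limit controls the number of particles in macroscopic boxes on each side, and conservation of mass pins the discrepancy. One must check that the error terms are genuinely $o(t)$ uniformly, which needs the hydrodynamic limit in the form "convergence of the integrated profile (current) uniformly on compact space-time sets" — this is available from \cite{R.} (and the remark in the introduction about adapting \cite{J.Y.}) for the initial measures at hand. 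A secondary technical point is the use of infinitely many particles at negative sites, which is why the boundedness of $g$ implied by \eqref{limit condition} (via $\tilde g(\rho)\to\ell$, hence $g$ bounded) is assumed: it guarantees the process started from $\tilde\xi$ is well defined, as noted in Section~\ref{invariant measures}. Once these are in place, the computation of $F_X$ and of the current limit is bookkeeping with \eqref{sol g rate}.
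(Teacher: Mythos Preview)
Your proposal has the right ingredients in outline but misses the key mechanism that makes the argument work, and the ``sandwiching'' you flag as the main obstacle is not how one should proceed. The paper does not try to pass from a hydrodynamic statement about empirical measures to the location of a single tagged discrepancy via sandwiching between step profiles; instead it computes $\frac{d}{dt}E_{\mu_{\infty,0}}[J_t^u(\tilde\xi)]$ in two ways. First, by the Kolmogorov backwards equation, this derivative equals $E[p\,g(\tilde\xi_t(-1))(J_t^u(\tilde\xi^{-1,0})-J_t^u(\tilde\xi))]$; since $\tilde\xi_t(-1)=\infty$ for all $t$, one has $g(\tilde\xi_t(-1))=\ell$ by \eqref{limit condition}, and under the basic coupling the difference of currents is exactly $\mathbf{1}\{X_2(t)\ge ut\}$, giving $\frac{d}{dt}E[J_t^u]=p\,\ell\,\bar P(X_2(t)\ge ut)$. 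Second, by Dynkin's formula applied to the joint process $(\xi_t,J_t^u)$, the same derivative equals $p\,E[g(\tilde\xi_t(ut-1))]-q\,E[g(\tilde\xi_t(ut))]$, and convergence to local equilibrium sends both expectations to $\tilde g(\rho(1,u))$. Equating the two yields $\bar P(X_2(t)\ge ut)\to F(\rho(1,u))/(p\ell)$ directly, with no need to localise a single discrepancy from bulk hydrodynamics. Your vague ``current identity'' paragraph gestures toward this but never writes down the generator computation, and your sandwiching plan would be substantially harder to execute.

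Your treatment of the $p\neq 1$ case is also incorrect. The characteristics of \eqref{EH-tazrp} have speeds $F'(\rho)=(p-q)\tilde g'(\rho)\ge 0$, so the rarefaction fan lives on $[F'(\infty),F'(0)]=[0,p-q]$; there is no left edge at $-q$ and the density does not ``stay bounded away from $0$ as $w\downarrow -q$''. The divergence of $J_2^{zr}(t)/t$ comes from a different mechanism: when $q>0$ the second class particle can jump left into the site $-1$ with infinitely many first class particles and become trapped there, so $X$ has an atom of mass $q/p$ at $u=0$, and since $\rho(1,0)=\infty$ the integral $\int_X^{F'(0)}\rho(1,u)\,du$ diverges on that event. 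Your integral formula $\int_u^{p-q}\rho(1,w)\,dw$ and the integration-by-parts identification with $F((F')^{-1}(X))-(F')^{-1}(X)X$ for $p=1$ are correct.
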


\begin{remark}
We notice that if one takes $p\neq{1}$ in the previous theorem, then the limiting distribution has an atom at $u=0$. More precisely, with probability $q/p$, the second class particle stays at the origin. This is related to the fact that starting the azrp from $\tilde{\xi}_*$, if the second class particle jumps to the site $-1$ which has infinitely many first class particles, then the second class particle gets trapped at that site and for that reason the current diverges.
\end{remark}

For the constant-rate azrp,  $\tilde{g}(\rho)=\frac{\rho}{1+\rho}$, therefore $\tilde g(\cdot)$ is concave and $\ell=1$. Since in this case $\rho(t,u)$ is given as in \eqref{sol g rate} with $(F')^{-1}\Big(\frac{u}{t}\Big)=\frac{\sqrt{(p-q)t}-\sqrt{u}}{\sqrt{u}}$, we conclude from the previous result that:

\begin{corollary} \label{LLN non couling}
Consider the constant-rate azrp starting from $\tilde{\xi}_*$  and denote by $X_{2}(t)$ the position of the second class particle at time $t$. Then
\begin{equation*}
\lim_{t\rightarrow{+\infty}}\frac{X_{2}(t)}{t}=X, \quad\ in \hspace{0,1cm} distribution
\end{equation*}
where $X$ is distributed according to $$F_{X}(u)=\frac{q+\sqrt{(p-q)u}}{p},$$ for $u\in{[0,p-q]}$. Moreover,
\begin{itemize}
\item for $p=1$, $$\lim_{t\rightarrow{+\infty}}\frac{J_{2}^{zr}(t)}{t}=(1-\sqrt{X})^2, \quad\ \textrm{in  distribution}$$

\item for $p\neq{1}$,
$$\lim_{t\rightarrow{+\infty}}\frac{J_{2}^{zr}(t)}{t}=+\infty.$$
\end{itemize}

\end{corollary}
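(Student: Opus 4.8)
The plan is to obtain Corollary~\ref{LLN non couling} as a direct specialization of Theorem~\ref{LLN with g}: the whole content reduces to checking that the constant-rate azrp meets the hypotheses of that theorem and then carrying out the explicit computations for this particular flux. So the proof will not contain any genuinely new probabilistic argument; all of that is already packaged inside Theorem~\ref{LLN with g}.

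First I would verify the hypotheses. For the constant-rate azrp one has $\tilde g(\rho)=\rho/(1+\rho)$, hence $\tilde g'(\rho)=(1+\rho)^{-2}>0$ and $\tilde g''(\rho)=-2(1+\rho)^{-3}<0$, so $\tilde g$ is concave; moreover $\lim_{\rho\to\infty}\tilde g(\rho)=1<+\infty$, so condition~\eqref{limit condition} holds with $\ell=1$. Since $g(k)=\mathbf 1\{k\ge 1\}$ is bounded, the process started from $\tilde\xi_*$ is well defined, as required. Theorem~\ref{LLN with g} then yields $X_2(t)/t\to X$ in distribution with
\[
F_X(u)=1-\frac{F(\rho(1,u))}{p\,\ell}=1-\frac{F(\rho(1,u))}{p}.
\]

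Next I would substitute the explicit formulas. By~\eqref{def j}, $F(\rho)=(p-q)\rho/(1+\rho)$, so $F'(\rho)=(p-q)(1+\rho)^{-2}$ is strictly decreasing from $F'(0)=p-q$ to $F'(\infty)=0$, and inverting on $(0,p-q)$ gives $(F')^{-1}(v)=(\sqrt{p-q}-\sqrt v)/\sqrt v$, the branch being forced by $\rho\ge 0$. The configuration $\tilde\xi_*$ corresponds to the decreasing step profile with $\rho=\infty$ on the negative half-line and $\lambda=0$ on the positive half-line, so~\eqref{sol g rate} at $t=1$ gives $\rho(1,u)=(F')^{-1}(u)$ for $u\in(0,p-q)$. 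Hence $\rho(1,u)/(1+\rho(1,u))=1-\sqrt{u/(p-q)}$ and $F(\rho(1,u))=(p-q)-\sqrt{(p-q)u}$, which upon substitution gives
\[
F_X(u)=1-\frac{(p-q)-\sqrt{(p-q)u}}{p}=\frac{q+\sqrt{(p-q)u}}{p},\qquad u\in[0,p-q].
\]
As a consistency check, $F_X(p-q)=1$ and $F_X(0^{+})=q/p$, matching the claimed support $[0,p-q]$ and the atom at the origin recorded in the Remark (note $q/p<1$ since $p>q$).

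Finally, for the current I would again read off Theorem~\ref{LLN with g}. When $p=1$ (so $q=0$ and $p-q=1$) it gives $J_2^{zr}(t)/t\to F\big((F')^{-1}(X)\big)-(F')^{-1}(X)\,X$ in distribution; using $F\big((F')^{-1}(u)\big)=F(\rho(1,u))=1-\sqrt u$ and $(F')^{-1}(X)=(1-\sqrt X)/\sqrt X$ this becomes
\[
\big(1-\sqrt X\big)-\big(1-\sqrt X\big)\sqrt X=\big(1-\sqrt X\big)^2 .
\]
When $p\ne 1$ there is nothing to compute: the divergence $J_2^{zr}(t)/t\to+\infty$ is inherited verbatim from Theorem~\ref{LLN with g}. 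The only thing resembling an obstacle is purely bookkeeping — making sure the square roots in $(F')^{-1}$ and in $\rho(1,u)$ are taken on the correct branch, which is dictated by $\rho\ge 0$ and $u\in(0,p-q)$ — so I do not expect any real difficulty beyond this algebra.
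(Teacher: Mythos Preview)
Your proposal is correct and is exactly the argument the paper uses: the corollary is stated immediately after observing that for the constant-rate azrp one has $\tilde g(\rho)=\rho/(1+\rho)$ (concave, $\ell=1$) and the explicit form of $(F')^{-1}$, and is then read off from Theorem~\ref{LLN with g}. Your algebra for $F_X$ and for the $p=1$ current is the intended computation, just written out in more detail than the paper provides.
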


By coupling the constant-rate tazrp with the tasep we are able to generalize the statement of the previous corollary. This is the content of next theorem:

\begin{theorem} \label{convergence on the azrp}
Consider the constant-rate tazrp starting from $\mu_{\rho,\lambda}$, with $0\leq{\lambda}<{\rho}\leq{\infty}$ and at the initial time add a second class particle at the origin. Let
 $X_{2}(t)$ denote the position at time $t$ of the second class particle. Then
\begin{equation*} \label{2cptazrpconv}
\begin{split}
&\lim_{t\rightarrow{+\infty}}\frac{X_{2}(t)}{t}=\Big(\frac{1+\mathcal{U}}{2}\Big)^2, \quad\ \textrm{almost surely},\\
&\lim_{t\rightarrow{+\infty}}\frac{J_{2}^{zr}(t)}{t}=\Big(\frac{1-\mathcal{U}}{2}\Big)^2, \quad\ \textrm{almost surely},
\end{split}
\end{equation*}
where $\mathcal{U}$ is uniformly distributed on $\Big[\cfrac{1-\rho}{1+\rho}\,,\,\cfrac{1-\lambda}{1+\lambda}\,\Big]$.
\end{theorem}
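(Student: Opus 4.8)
The plan is to transport the whole problem to the tasep through an explicit bijection between configurations of the constant-rate tazrp and of the tasep, and then to quote Theorem~\ref{convergence to uniform of scp of tasep} together with its almost sure refinement.

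First I would set up the bijection $\Phi$. Given an exclusion configuration $\eta\in\{0,1\}^{\mathbb Z}$ with infinitely many particles on each side, label its particles increasingly as $\cdots<x_{-1}<x_0<x_1<\cdots$, with $x_0$ the first particle at or to the right of the origin, and set $\xi(i):=x_{i+1}-x_i-1$, the number of empty sites between consecutive particles. This is a bijection onto $\mathbb N_0^{\mathbb Z}$, and the jump of an exclusion particle to the left (resp.\ right) corresponds to the motion of a zero-range particle to the right (resp.\ left) between the two adjacent ``gap'' sites. Since the constant-rate tazrp has $p=1$, under $\Phi$ it becomes a tasep whose particles jump only to the left; composing with the reflection $x\mapsto-x$ (which also reverses the labelling index) turns this into the standard tasep. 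One then extends $\Phi$ to the coupled pair $(\xi^0,\xi^1)$ carrying the discrepancy at the origin: adding one particle to the zero-range at site $0$ amounts, on the exclusion side, to inserting one extra empty site in the gap next to $x_0$, i.e.\ to a single exclusion second class particle within bounded distance of the origin. Under the basic coupling the dynamics are intertwined, and --- this is the point to be checked with care --- the zero-range priority rule, namely that the second class particle leaves its site exactly when no first class particle is present there, is carried onto the usual rule governing an exclusion second class particle (in particular when the zero-range second class particle shares its site with several first class particles, or sits next to the infinite stack of particles present when $\rho=\infty$). Finally $\Phi$ maps $\mu_{\rho,\lambda}$ to the Bernoulli product measure $\nu_{\alpha,\beta}$ with $\alpha=\tfrac1{1+\rho}$ and $\beta=\tfrac1{1+\lambda}$, since a geometric gap of mean $\rho$ is precisely the gap law of the Bernoulli product measure with parameter $\tfrac1{1+\rho}$; after the reflection the initial data is $\nu_{\beta,\alpha}$ with $\beta>\alpha$ (with $\alpha=0$ when $\rho=\infty$), together with a second class particle near the origin, which is exactly the rarefaction-fan situation of Theorem~\ref{convergence to uniform of scp of tasep}.

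Applying that theorem (and its almost sure strengthening, as in \cite{M.G., F.G.M.}), the tasep second class particle $Y_2(t)$ satisfies $Y_2(t)/t\to\mathcal W$ almost surely, with $\mathcal W$ uniform on $[1-2\beta,\,1-2\alpha]$. Undoing the reflection, $\mathcal U:=-\mathcal W$ is uniform on $[2\alpha-1,\,2\beta-1]=\big[\tfrac{1-\rho}{1+\rho},\,\tfrac{1-\lambda}{1+\lambda}\big]$, as in the statement. It remains to read $X_2(t)$ and $J_2^{zr}(t)$ off of $Y_2(t)$: by construction $X_2(t)$ is (up to a bounded term) the label of the exclusion particle located at $Y_2(t)$, hence the net current of exclusion particles across an observer travelling at speed $\mathcal W$, while $J_2^{zr}(t)$ is the corresponding net current of empty sites. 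In the rarefaction fan the tasep solution is self-similar, $\rho(1,v)=\tfrac{1-v}{2}$ for $v\in(1-2\beta,1-2\alpha)$, so the hydrodynamic law of large numbers for the current --- in its almost sure form, which follows from the explicit profile together with attractiveness and monotonicity --- gives, after accounting for the reflection of labels,
\begin{equation*}
\frac{X_2(t)}{t}\;\longrightarrow\;\Big(\tfrac{1-\mathcal W}{2}\Big)^{2}=\Big(\tfrac{1+\mathcal U}{2}\Big)^{2},\qquad
\frac{J_2^{zr}(t)}{t}\;\longrightarrow\;\Big(\tfrac{1+\mathcal W}{2}\Big)^{2}=\Big(\tfrac{1-\mathcal U}{2}\Big)^{2},
\end{equation*}
which is the claimed convergence.

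I expect the main obstacle to be the first step: constructing $\Phi$ at the level of the multi-class dynamics and checking that the basic-coupling zero-range evolution, with its ``first class particles have priority'' rule, is carried \emph{exactly} onto the evolution of a tasep second class particle. Once this intertwining is in place, what remains is bookkeeping (relating exclusion labels and currents to $X_2(t)$ and $J_2^{zr}(t)$), the self-similar computation above, and the passage from convergence in probability to almost sure convergence, for which one invokes the sharper results already available for the tasep.
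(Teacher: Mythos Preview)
Your plan is correct and rests on the same idea as the paper --- transport the problem to tasep via the standard gap bijection --- but your execution differs from the paper's in two respects worth noting.

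First, the paper uses the dual form of your bijection: it labels the \emph{holes} of the tasep configuration (rather than the particles) and sets $\xi(i)$ equal to the number of tasep particles between consecutive holes. This maps $\mu_{\rho,\lambda}$ directly to $\nu_{\alpha,\beta}$ with $\alpha=\rho/(1+\rho)$, $\beta=\lambda/(1+\lambda)$, and no reflection is needed. Your version (label particles, count holes) is equivalent after particle--hole duality and reflection, and indeed yields the same interval for $\mathcal U$; the difference is purely cosmetic.

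Second, and more substantively, the paper does not pass through $Y_2(t)/t$ and a hydrodynamic current computation. Instead it observes that under the bijection one has the \emph{exact identities}
\[
X_2(t)=H_2^{se}(t),\qquad J_2^{zr}(t)=J_2^{se}(t),
\]
where $H_2^{se}(t)$ and $J_2^{se}(t)$ are the hole and particle currents across the tasep second class particle. It then quotes directly the almost sure law of large numbers for these currents proved in \cite{F.M.P.}, which gives $H_2^{se}(t)/t\to((1+\mathcal U)/2)^2$ and $J_2^{se}(t)/t\to((1-\mathcal U)/2)^2$. Your route --- first $Y_2(t)/t\to\mathcal W$ a.s., then ``read off'' the current along this random line via hydrodynamics --- ultimately requires the same input: almost sure convergence of the tasep current along a random characteristic is precisely the content of \cite{F.M.P.}, and your appeal to ``the explicit profile together with attractiveness and monotonicity'' is a sketch of the argument given there. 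So your last step is not wrong, but it is less direct; the paper's exact identification of $X_2(t)$ with $H_2^{se}(t)$ lets one cite the black box and finish in one line.

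Your identification of the main obstacle (checking that the zero-range priority rule for the second class particle is carried exactly onto the tasep second class particle rule) is accurate, and the paper spends most of its proof on precisely this point, working through the cases by hand.
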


\section{Proof of Theorem \ref{proposition}}
\begin{proof}

We start by considering the case $\rho<\infty$. Fix a configuration $\xi\in{\mathbb{N}_0^{\mathbb{Z}}}$ and denote by $J^{u}_{t}(\xi)$ the current of particles that cross the time dependent line $ut$ during the time interval $[0,t]$. Then $J^{u}_{t}(\xi)$ is the number of particles of $\xi$ that are at left of the
origin (including it) at time $0$ and are at the right of $ut$ at time $t$, minus the number of particles of $\xi$ that are strictly at the
right of the origin at time $0$ and are at left of $ut$ (including it) at time $t$.  Fix a configuration $\xi$. For each site $x\in{\mathbb{Z}}$, we label the $\xi(x)$ particles from the bottom to the top, in such a way that the first particle is the one being at the bottom and the $\xi(x)$-th particle being the
one at the top. For  $x\in{\mathbb{Z}}$ and $z=1,\cdots,\xi(x)$, we denote by $X^{x,z}_{t}(\xi)$ the position at time $t$ of the z-th tagged particle initially at the site $x$.
We notice that we use the same definition of the current as given for the exclusion process in \cite{F.F.1}, but extended to the zero-range process. Then:
\begin{equation} \label{def of current through ut}
J^{u}_{t}(\xi)=\sum_{x\leq{0}}\sum_{z=1}^{\xi(x)}\textbf{1}{\{X^{x,z}_{t}(\xi)>ut\}}-\sum_{x>0}\sum_{z=1}^{\xi(x)}\textbf{1}{\{X^{x,z}_{t}(\xi)\leq{ut}\}}.
\end{equation}

For $x\in{\mathbb{Z}}$, denote by $\tau_{x}$ the space translation by $x$, so that for $y\in{\mathbb{Z}}$, $\tau_x\xi(y):=\xi(y-x)$. This definition extends naturally to local functions $f$ as $\tau_{x}f(\xi)=f(\tau_x\xi)$ and for probability measures $\mu\in{\mathbb{N}_0^{\mathbb{Z}}}$ as $\int f(\xi)\tau_{x}\mu(d\xi)=\int f(\tau_x\xi)\mu(d\xi)$.

 Now, we compute in two different ways:
$$\int E\Big[J^{u}_{t}(\xi)\Big]\tau_1\mu_{\rho,0}(d \xi)-\int E\Big[J^{u}_{t}(\xi)\Big]\mu_{\rho,0}(d \xi).$$ Fix $\xi^0$ and $\xi^1$ distributed according to $\tau_1\mu_{\rho,0}$ and $\mu_{\rho,0}$, respectively, such that $\xi^0(x)=\xi^1(x)$ for all $x\neq{0}$ and at the origin  $\xi^0(0)$ is Geometric distributed with parameter $1/(1+\rho)$ and $\xi_0^1(0)=0$. Let $\xi^{0}_t$ and $\xi^{1}_t$ be two copies of the constant-rate tazrp starting from  $\xi^0$ and $\xi^1$, respectively. For  a coupling $\bar{P}$ of $\xi^{0}_t$ and $\xi^{1}_t$ starting from  $\xi^0$ and $\xi^1$, respectively,
last expression equals to
\begin{equation} \label{dif exp}
\int d\bar{\mu}(\xi^0,\xi^1)\bar{E}\Big[J^{u}_{t}(\xi^0)-J^{u}_{t}(\xi^1)\Big],
\end{equation}
where $\bar{E}$ is the expectation with respect to $\bar{P}$ and $\bar{\mu}$ is the coupling measure with marginals: $\bar\mu((\xi^0,\xi^1):\xi^0)=\tau_1\mu_{\rho,0}(\xi^0)$ and
 $\bar\mu((\xi^0,\xi^1):\xi^1)=\mu_{\rho,0}(\xi^1).$ Now, the difference between the fluxes $J^{u}_{t}(\xi^0)-J^{u}_{t}(\xi^1)$ is non zero  if and only if $\xi^0$ and $\xi^1$ have at least one discrepancy at the origin. Therefore,
we split last event on the number of discrepancies between the configurations and we condition the previous integral to $D_{k}:=\{(\xi^0,\xi^1): \xi^0(0)=\xi^1(0)+k\}$. Then, last expression can be written as
\begin{equation} \label{dif exp new}
\sum_{k\geq{1}}\int d\bar{\mu}(\xi^0,\xi^1)\bar{E}\Big[\Big(J^{u}_{t}(\xi^0)-J^{u}_{t}(\xi^1)\Big)\Big|D_{k}\Big]\bar{\mu}(D_{k}).
\end{equation}
Notice that $\bar{\mu}(D_{k})=\rho^k/(1+\rho)^{k+1}$. At this point, the previous integral is restricted to configurations $\xi^0$ and $\xi^1$ with exactly $k$ discrepancies at the origin.
 Now, we consider the basic coupling described in Section \ref{sec coupling and discrepancy} in order to compute \eqref{dif exp new}. Under that coupling, particles in the configurations $\xi^0$ and $\xi^1$ use the same realizations of the clocks in order to jump. So, in $D_k$ the difference between the currents $J^{u}_{t}(\xi^0)$ and $J^{u}_{t}(\xi^1)$ can vary from $1$ to $k$ and it depends on the relative position of the $k$ discrepancies with respect to the time dependent line $ut$. Therefore,  we can write  \eqref{dif exp new} as
\begin{equation}\label{dif exp new1}
\sum_{k\geq{1}}\Big(\sum_{j=1}^{k}jP_{\bar{\mu_{k}}}(J^{u}_{t}(\xi^0)-J^{u}_{t}(\xi^1)=j)\Big)\frac{\rho^k}{(1+\rho)^{k+1}},
\end{equation}
 where $\bar{\mu}_{k}$ is the coupling measure conditioned to $D_{k}$: $\bar\mu_k(\cdot)=\bar\mu(\cdot|D_k)$.

 It is easy to check that the discrepancies behave as second class particles, see Section \ref{sec coupling and discrepancy}, so in order to compute the previous expression we need to recall the dynamics of second class particles. When first and second class particles share a same site and the clock at that site rings, the first class particles have priority to jump. When at a site there are only second class particles, they are labeled from the bottom to the top and the first jumping second class particle is the one with the lowest label. In $D_k$, at the initial time in the configuration $\xi^0$, we label the $k$ second class particles from the bottom to the top, and for $j\in{\{1,\cdots,k\}}$ we denote by $X_{2}^j(0)$ the $j$-th second class particle. If the clock at the origin rings, then the first jumping second class particle is $X_{2}^1(0)$. If the next clock to ring is the one at the origin, then $X_{2}^2(0)$ is the first jumping second class particle and the rest of them waits a new random time. As a consequence, the $j$-th second class particle sees the second class particles with label greater (resp. smaller) than $j$ as holes (resp. particles). For $j\in{\{1,\cdots,k\}}$, let $X_2^{j}(t)$ denote the position at time $t$ of the $j$-th second class particle initially at the origin.

Suppose that in $D_k$, the difference between the currents equals to $j\in{\{1,\cdots,k\}}$. Then, this happens if and only if the first $j$-th second class particles initially at the origin are at the right hand side of $ut$ at time $t$ and the rest of them are at the left hand side of $ut$, that is $\{X_2^{j}(t)\geq{ut}>X_2^{j+1}(t)\}$. This is the key point in the proof where we
 use the {\em{total asymmetry}} of jumps. In presence of partial asymmetric jumps the second class particles do not preserve their order. Then \eqref{dif exp new1} can be written as
\begin{equation} \label{eq sum2}
\sum_{k\geq{1}}\Big(\sum_{j=1}^{k-1}j\,p_{k}(j,t)+\,k\,P_{\bar{\mu}_{k}}(X_2^{k}(t)\geq{ut})\Big)\frac{\rho^k}{(1+\rho)^{k+1}},
\end{equation}
where $p_{k}(j,t)=P_{\bar{\mu_{k}}}(X_2^{j}(t)\geq{ut}>X_2^{j+1}(t))$. We remark that, as consequence of the dynamics of second class particles, whenever we write the event $\{X_2^j(t)\geq{ut}\}$ it should be understood  as $\{X_2^1(t)\geq{ut},\cdots,X_2^j(t)\geq{ut}\}$.
Now, we notice that
\begin{equation*}
\sum_{j=1}^{k-1}jp_{k}(j,t)=\sum_{j=1}^{k-1}P_{\bar{\mu}_{k}}(X_2^{j}(t)\geq{ut})-(k-1)P_{\bar{\mu}_{k}}(X_2^{k}(t)\geq{ut}),
\end{equation*}
since by a simple computation $p_{k}(j,t)$ can be written as $P_{\bar{\mu_{k}}}(X_2^j(t)\geq{ut})-P_{\bar{\mu_{k}}}(X_2^{j+1}(t)\geq{ut})$.
Again, the {\em{total
asymmetry}} of jumps is invoked. We point out that, if jumps are partially asymmetric and the difference of the currents is $j$, then in $D_{k}$, one must have
 $j$ of the $k$ discrepancies at the right hand side of $ut$. This can happen a number of $C^{k}_{j}$ events, whose probability we cannot control and for that reason we cannot obtain a similar result to the one above. Collecting these facts together, (\ref{eq sum2}) can be written as
 \begin{equation}\label{dif exp res}
 \sum_{k\geq{1}}\frac{\rho^k}{(1+\rho)^{k+1}}\Big(\sum_{j=1}^{k}P_{\bar{\mu}_{k}}(X_2^{j}(t)\geq{ut})\Big).
 \end{equation}
Now we notice that in the constant-rate tazrp, a particle at a site jumps to the right with a rate independently from the number of particles at its site. As a consequence, $P_{\bar{\mu}_{k}}(X_2^{j}(t)\geq{ut})$ does not depend on $k$, since it corresponds to the probability that the $j$-th second class particle
initially at the origin (having initially the origin with $k$ second class particles), being at the right hand side of $ut$ at time $t$. This event does not depend on the number of $k$ particles but on
$j$ and $t$. When one considers more general zero-range processes defined through $g(\cdot)$, last property is not necessarily true and for that reason one cannot use this argument in that case, nevertheless see in Remark \ref{general g} what we can get for those processes.
From last observations we are able to apply Fubini's theorem to \eqref{dif exp res}. Moreover, noticing that $$\sum_{k\geq{j}}\frac{\rho^k}{(1+\rho)^{k+1}}=\Big(\frac{\rho}{1+\rho}\Big)^j,$$
 we rewrite \eqref{dif exp res} as
\begin{equation*}
\sum_{j\geq{1}}P_{\bar{\mu}}(X_2^{j}(t)\geq{ut})\Big(\frac{\rho}{1+\rho}\Big)^j.
\end{equation*}

Now, we compute (\ref{dif exp}) by coupling $\tau_1\mu_{\rho,0}$ and $\mu_{\rho,0}$ in such a
way that $\xi^0=\tau_{1}\xi^1$. We use the particle to particle coupling,  therefore, we  label the particles in both configurations, only one configuration looks at the clocks and particles are attached by the labels. Then, by \eqref{def of current through ut} we have that
\begin{equation*}
\begin{split}
J^{u}_{t}(\xi^0)-J^{u}_{t}(\xi^1)=&\sum_{x\leq{0}}\sum_{z=1}^{\xi^{0}(x)}\textbf{1}{\{X^{x,z}_{t}(\xi^0)>ut\}}-
\sum_{x>{0}}\sum_{z=1}^{\xi^{0}(x)}\textbf{1}{\{X^{x,z}_{t}(\xi^0)\leq{ut}\}}\\-&
\sum_{x\leq{0}}\sum_{z=1}^{\xi^{1}(x)}\textbf{1}{\{{X}^{x,z}_{t}(\xi^1)>ut\}}+\sum_{x>{0}}\sum_{z=1}^{\xi^{1}(x)}\textbf{1}{\{{X}^{x,z}_{t}(\xi^1)\leq{ut}\}}.
\end{split}
\end{equation*}
 Since $\xi^0=\tau_{1}\xi^1$ and by the particle to particle coupling, we rewrite the right hand side of the previous expression in terms of $\xi^0$ as

 \begin{equation*}
\begin{split}
&\sum_{x\leq{0}}\sum_{z=1}^{\xi^{0}(x)}\textbf{1}{\{X^{x,z}_{t}(\xi^0)>ut\}}-
\sum_{x>{0}}\sum_{z=1}^{\xi^{0}(x)}\textbf{1}{\{X^{x,z}_{t}(\xi^0)\leq{ut}\}}\\-&
\sum_{x\leq{0}}\sum_{z=1}^{\xi^{0}(x+1)}\textbf{1}{\{{X}^{x,z}_{t}(\xi^0)>ut+1\}}+\sum_{x>{0}}\sum_{z=1}^{\xi^{0}(x+1)}
\textbf{1}{\{{X}^{x,z}_{t}(\xi^1)\leq{ut+1}\}}.
\end{split}
\end{equation*}

 We notice that the particle to particle coupling is used here in order to guarantee that the event  $\{{X}^{x,z}_{t}(\xi^1)>ut\}$ (resp. $\{X^{x,z}_{t}(\xi^0)\leq{ut}\}$) corresponds to $\{{X}^{x,z}_{t}(\xi^0)>ut+1\}$ (resp. $\{{X}^{x,z}_{t}(\xi^1)\leq{ut+1}\}$), which is a consequence of particles being attached by the labels. Now, a simple computation shows that
$$J^{u}_{t}(\xi^0)-J^{u}_{t}(\xi^1)=\sum_{x\in{\mathbb{Z}}}\sum_{z=1}^{\xi^{0}(x)}\textbf{1}{\{X^{x,z}_{t}(\xi^0)=ut+1\}}-\xi^{0}(1).$$
 Applying expectation (with respect to $\mu_{\rho,0}$) to the right hand side of last expression, we obtain that (\ref{dif exp}) is equal to $$E_{\mu_{\rho,0}}[\xi^0_{t}(ut+1)]-E_{\mu_{\rho,0}}[\xi^{0}(1)].$$
Now, since $E_{\mu_{\rho,0}}[\xi^{0}(1)]=0$ and by the {\em{convergence to local equilibrium}} (see \cite{A.V.,B.F.}) it follows that:
$$\lim_{t\rightarrow{+\infty}}E_{\mu_{\rho,0}}[\xi^0_{t}(ut+1)]=\rho(1,u),$$
where $\rho(t,u)$ is given in \eqref{sol g rate} with $(F')^{-1}\Big(\frac{u}{t}\Big)=\frac{\sqrt{(p-q)t}-\sqrt{u}}{\sqrt{u}}$. Putting together the previous computations the proof of the first claim ends.

Now we consider the case $\rho=\infty$. By applying the same argument as above, the main difference come from the fact that initially, $\xi^0$ and $\xi^1$ have infinitely many discrepancies at the origin, from where we obtain that \eqref{dif exp} equals to
\begin{equation*}
\sum_{j\geq{1}}j\,P_{\bar{\mu}}(X_2^j(t)\geq{ut}>X_2^{j+1}(t)).
\end{equation*}
Denoting the previous probability by $p(j,t)$ and repeating the same computations as above, we get that \eqref{dif exp} equals to
$$\sum_{j\geq{1}}P_{\bar{\mu}}(X_2^j(t)\geq{ut}).$$ To conclude the proof one just has to follow the same steps as above.
\end{proof}

\begin{remark}\label{general g}
We notice that the previous result also holds for the tazrp starting from $\mu^g_{\rho,\lambda}$ for $g(\cdot)$ satisfying the conditions of Section \ref{seczr} such that $\tilde{g}(\cdot)$ is a concave function. For these processes, we cannot use Fubini´s Theorem as we did below \eqref{dif exp res}, because jumps can depend on the number of particles at a site. Therefore, under the same conditions of Theorem \ref{proposition} we have that
\begin{itemize}
\item for $\rho<\infty$ and $u\in{\Big[\tilde{g}'(\rho),1\Big]}$,
\begin{equation*}
\begin{split}
\lim_{t\rightarrow{+\infty}}\,\,\sum_{k\geq{1}}\Big(\sum_{j=1}^k\,\,P(X_2^1(t)\geq{ut},\cdots,X_2^{j}(t)\geq{ut})\Big)\frac{(\tilde{g}(\rho))^{k}}{Z(\tilde{g}(\rho))g(k)!}=\rho(1,u),
\end{split}
\end{equation*}

\item  if $g(\cdot)$ satisfies \eqref{limit condition}, for $\rho=\infty$ and $u\in{[0,1]}$,
\begin{equation*}
\lim_{t\rightarrow{+\infty}}\,\,\sum_{j\geq{1}}\,\,P(X_2^1(t)\geq{ut},\cdots,X_2^{j}(t)\geq{ut})=\rho(1,u),
\end{equation*}
\end{itemize}
where $\rho(t,u)$ is given in \eqref{sol g rate}.
\end{remark}

\begin{remark} \label{asymptotic independence}
 Here we notice that if $\rho<\infty$, then  for $u\in{[1/(1+\rho)^2,1]}$ it holds $$\lim_{t\rightarrow{+\infty}}\sum_{j\geq{1}}P(X_2^1(t)\geq{ut},\cdots,X_2^{j}(t)\geq{ut})\Big(\frac{\rho}{1+\rho}\Big)^j=
 \lim_{t\rightarrow{+\infty}}\sum_{j\geq{1}}\Big(P(X_2(t)\geq{ut})\Big)^j\Big(\frac{\rho}{1+\rho}\Big)^j,$$
which is equal to $\rho(t,u)$ given in \eqref{sol g rate} with $(F')^{-1}\Big(\frac{u}{t}\Big)=\frac{\sqrt{(p-q)t}-\sqrt{u}}{\sqrt{u}}.$

 For that purpose notice that by Theorem \ref{convergence on the azrp} we have that
\begin{equation}\label{r1}
\lim_{t\rightarrow{+\infty}}P(X_2(t)\geq{ut})=P\Big(\Big(\frac{1+\mathcal{U}}{2}\Big)^2\geq{u}\Big)=\Big(\frac{1+\rho}{\rho}\Big)(1-\sqrt u),
\end{equation}
for $2\sqrt u-1\in[(1-\rho)(1+\rho),1]$ which is equivalent to $u\in{[1/(1+\rho)^2,1]}$. Then, for  $u\in[1/(1+\rho)^2,1]$:
\begin{equation*}
\begin{split}
\lim_{t\rightarrow{+\infty}}\sum_{j\geq{1}}\Big(P(X_2(t)\geq{ut})\Big)^j\Big(\frac{\rho}{1+\rho}\Big)^j=&\sum_{j\geq{1}}
\lim_{t\rightarrow{+\infty}}\Big(P(X_2(t)\geq{ut})\Big)^j\Big(\frac{\rho}{1+\rho}\Big)^j\\
=&\sum_{j\geq{1}}
(1-\sqrt{u})^j\\=&\frac{1-\sqrt u}{\sqrt u}\\=&\rho(1,u).
\end{split}
\end{equation*}
In the first equality we interchanged the summation with the limit, which is possible since it is a convergent Geometric series; and in the second equality we used \eqref{r1}. Comparing this with the statement of Theorem \ref{proposition}, the observation follows.

Analogously, if $\rho=\infty$, then for $u\in{[0,1]}$ it holds $$\lim_{t\rightarrow{+\infty}}\,\,\sum_{j\geq{1}}\,\,P(X_2^1(t)\geq{ut},\cdots,X_2^{j}(t)\geq{ut})=
 \lim_{t\rightarrow{+\infty}}\,\,\sum_{j\geq{1}}\Big(\,\,P(X_2(t)\geq{ut})\Big)^j=\rho(t,u).$$
  For that purpose it is enough to notice that \eqref{r1} holds with $\rho/(1+\rho)$ replaced by $1$ for $u\in{[0,1]}$, from where we get the second equality in the previous expression, the rest follows by Theorem \ref{proposition}.
\end{remark}

\section{Proof of Theorem \ref{LLN with g}}

\begin{proof}
 For $\xi\in{\mathbb{N}_0^{\mathbb{Z}}}$, let $J_{t}^{u}(\xi)$ be the current of first particles that cross the time dependent line $ut$ up to time $t$. If we suppose that $ut$ is an integer, then $J_{t}^{u}(\xi)$ is the number of particles that jump from $ut-1$ to $ut$ minus the number of particles that jump from $ut$ to $ut-1$.
 Consider the azrp starting from $\tilde{\xi}$ introduced in Section \ref{invariant measures}. Then, we have that $J_{t}^{u}(\tilde{\xi})=\sum_{x\geq{ut}}\tilde{\xi}_{t}(x).$
 Without lost of generality we suppose that $ut$ is an integer number, otherwise a similar computation can be done, see the proof of Theorem 2.2 of \cite{F.G.M.} for details.

By the Kolmogorov backwards equation we have that
\begin{equation}\label{ec:1}
\frac{d}{dt}E_{\mu_{\infty,0}}[J_{t}^{u}(\tilde{\xi})]=E_{\mu_{\infty,0}}[\mathcal{L}J_{t}^{u}(\tilde{\xi})]=E_{\mu_{\infty,0}}[p\,g(\tilde{\xi}_t(-1))(J_{t}^{u}(\tilde{\xi}^{-1,0})-J_{t}^{u}(\tilde{\xi}))].
\end{equation}

Now we compute $\frac{d}{dt}E_{\mu_{\infty,0}}[J_{t}^{u}(\tilde{\xi})]$ in two different ways.
At first, notice that since for $\tilde \xi$ there is no particle at the origin, when a particle jumps from $-1$ to $0$, then there is only one
discrepancy at the origin between the two configurations  $\tilde{\xi}$ and $\tilde{\xi}^{-1,0}$, see the figure below:

\vspace{2mm}

\hspace{2.65cm}\vdots   \hspace{5.3cm}\vdots

\hspace{2.47cm}
$\CIRCLE$ \hspace{4.95cm} $\CIRCLE$

\hspace{2.47cm}
$\CIRCLE$ \hspace{4.95cm} $\CIRCLE$

\hspace{2.05cm}
$\ldots\CIRCLE \underline{\Circle} \Circle  \Circle \Circle \Circle\ldots$ \hspace{0.5cm} $\tilde{\xi}$\hspace{1.95cm} $\ldots\CIRCLE \underline\CIRCLE \Circle  \Circle \Circle \Circle\ldots$ \hspace{0.5cm}$\tilde{\xi}^{-1,0}$

\vspace{2mm}

 Let $\xi^0_{t}$ and $\xi^1_t$ be two copies of the  azrp starting from $\tilde{\xi}^{-1,0}$ and $\tilde{\xi}$, respectively. Since there is only one discrepancy between $\tilde{\xi}^{-1,0}$ and $\tilde{\xi}$, we use the basic coupling, so that particles  use the same realizations of the clocks in order to jump, see Section \ref{sec coupling and discrepancy}. Moreover, by the attractiveness property together with the conservation of the number of particles, there is at most one discrepancy between $\xi^0_{t}$ and $\xi^1_t$ at any time $t>0$.
It is simple to check that this discrepancy behaves as a second class particle.

 Now, we notice that it might happen that the discrepancy disappears at some time $t$. This corresponds to a jump of the discrepancy to the site $-1$, which has infinitely many first class particles. If this happens then the second class particle gets trapped at the site $-1$ and for this reason in partially asymmetric jumps, the limiting distribution has an atom at $u=0$ and the current diverges.

Now, let $X_{2}(t)$ denote the position at time $t$ of this discrepancy. By construction $X_{2}(0)=0$.
  We notice that, since there is only one discrepancy  between $\tilde{\xi}$ and $\tilde{\xi}^{-1,0}$, the difference between the counting processes $J_{t}^{u}(\tilde{\xi}^{-1,0})$ and $J^{u}_t(\tilde{\xi})$ in  \eqref{ec:1} is at most one and it is non-zero if and only if the discrepancy is at $ut$ or at its right hand side, at time $t$, therefore $J_{t}^{u}(\tilde{\xi}^{-1,0})-J_{t}^{u}(\tilde{\xi})=\textbf{1}{\{X_{2}(t)\geq{{ut}}\}}$.  Moreover, due to the chosen initial condition we have that $\tilde{\xi}_t(-1)=\infty$ for all time $t\geq{0}$. Then, by assumption \eqref{limit condition} we obtain that
  \begin{equation}
\frac{d}{dt}E_{\mu_{\infty,0}}[J_{t}^{u}(\tilde{\xi})]=p\,\ell\,\bar{P}(X_{2}(t)\geq{ut}),
\end{equation}
 where $\bar{P}$ denotes the coupling measure.

Now we compute $\frac{d}{dt}E_{\mu_{\infty,0}}[J_{t}^{u}(\tilde{\xi})]$ in a different way. By Dynkin's formula we decompose $J_{t}^{u}(\tilde{\xi})$ as a martingale $\mathcal{M}_{t}(\tilde{\xi})$ plus a compensator as: $$J_{t}^{u}(\tilde{\xi})=\mathcal{M}_{t}(\tilde{\xi})+\int_{0}^{t}pg( \tilde{\xi}_s(us-1))-qg(\tilde{\xi}_s(us))ds.$$
For that, it is enough to notice that the process $\{(\xi_t,J_t^u(\xi));t\geq{0}\}$ has generator $\Omega$ given  on local functions $f:\mathbb{N}_0^{\mathbb{Z}}\times\mathbb{Z}\rightarrow{\mathbb{R}}$ by $\Omega f(\xi,J)$ equal to
\begin{equation*}
\begin{split}
&pg(\xi(ut-1))[f(\xi^{ut-1,ut},J+1)-f(\xi,J)]+qg(\xi(ut))[f(\xi^{ut-1,ut},J-1)-f(\xi,J)]\\
&+\sum_{x\neq{ut-1}}pg(\xi(x))[f(\xi^{x,x+1},J)-f(\xi,J)]+\sum_{x\neq{ut}}qg(\xi(x))[f(\xi^{x,x-1},J)-f(\xi,J)]
\end{split}
\end{equation*}
and that $\Omega(J_s^u(\tilde{\xi}))=\Omega(\pi_2(\tilde{\xi}_s,J_s^u(\tilde{\xi})))$, where $\pi_2(\xi,J)=J$. Since the mean of martingales is constant and the martingale $\mathcal{M}_{t}$ vanishes at time $0$, we get that
\begin{equation*}
\frac{d}{dt}E_{\mu_{\infty,0}}[J_{t}^{u}(\tilde{\xi})]=p\,E_{\mu_{\infty,0}}[g(\tilde{\xi}_t(ut-1))]-q\,E_{\mu_{\infty,0}}[g(\tilde{\xi}_{t}(ut))].
\end{equation*}

Now, by the {\em{convergence to local equilibrium}} (see \cite{B.F.}) it follows that
\begin{equation*}
\begin{split}
&\lim_{t\rightarrow{+\infty}}E_{\mu_{\infty,0}}[g(\tilde{\xi}_{t}(ut-1))]=\tilde{g}(\rho(1,u)),\\&\lim_{t\rightarrow{+\infty}}E_{\mu_{\infty,0}}[g(\tilde{\xi}_{t}(ut))]=\tilde{g}(\rho(1,u)),
\end{split}
\end{equation*}
where $\rho(t,u)$ is given in \eqref{sol g rate} with $\rho=\infty$ and $\lambda=0$.
 Collecting these facts together we obtain that $$\lim_{t\to+\infty}p\,\ell\,\bar{P}(X_{2}(t)\geq{ut})=F(\rho(1,u)).$$
 Then, the limiting distribution is given by $$F_X(u):=1-\cfrac{F(\rho(1,u))}{p \,\ell}$$
for $u\in{[F'(+\infty),F'(0)]}$. A simple computation shows that $F'(+\infty)=0$ and $F'(0)=(p-q)$. Now we notice that $F_X(0)=q/p$ and easily one shows that $F_X(\cdot)$ is a distribution function. This finishes the proof of the first claim.

Now, we prove the L.L.N. for the current of first class particles that cross over the second class particle up to the time $t$.
In $\tilde{\xi}$ we label the first class particles at site $-1$ from the bottom to the top and we denote by $X_{1}(t)$ the position at time $t$ of the first jumping first class particle. Since first class particles preserve their order, the number of first class particles at the right hand side of the second class particle (or at its site) at time $t$, is equal to the number of first class particles  between the sites of $X_{1}(t)$ and of $X_{2}(t)$ at time $t$. Then
$$J_{2}^{zr}(t)=\sum_{x\geq{X_{2}(t)}}^{X_{1}(t)}\xi_{t}(x).$$
Notice that at positive sites, $\xi(x)$ is distributed according to $\mu_{0}$. A L.L.N. for $X_{1}(t)$ holds (see \cite{Seth.} for example): $\lim_{t\rightarrow{+\infty}}\frac{X_{1}(t)}{t}=\frac{F(\lambda)}{\lambda}$, in $\mu_0$-probability.
Since $\lambda=0$, the previous limit equals to $F'(0)$.

Now, let $f_t(at,bt)=\frac{1}{t}\sum_{x=at}^{bt} \xi_t(x)$. As a consequence of  the hydrodynamic limit derived in \cite{R.}, we have that
$$\lim_{t\rightarrow{+\infty}}\frac{1}{t}\sum_{x=at}^{bt} \xi_t(x)=\int_{a}^{b}\rho(1,u)du,$$ in $\mu_{\rho,\lambda}S_t$-probability,
where $\rho(t,u)$  is given in \eqref{sol g rate} with $\rho=\infty$ and $\lambda=0$. Since $\frac{J_{2}^{zr}(t)}{t}=f_t(X_2(t)/t,X_1(t)/t)$ and $X_2(t)/t$ (resp. $X_1(t)/t$) converges in distribution, as $t$ goes to $+\infty$, to $X$ (resp. $F'(0)$),  we get that:
\begin{equation*}
\lim_{t\rightarrow{+\infty}}\frac{J_{2}^{zr}(t)}{t}=\int_{X}^{F'(0)}\rho(1,u)du, \quad\ \textrm{in distribution}.
\end{equation*}
Now, if $p=1$, then $X$ has no atoms and it is distributed on $[0,1]$. As a consequence we get
\begin{equation*}
\lim_{t\rightarrow{+\infty}}\frac{J_{2}^{zr}(t)}{t}=\int_{X}^{F'(0)}(F')^{-1}(u)du, \quad\ \textrm{in distribution}
\end{equation*}
and a simple computation finishes the proof of the first claim.
Now, if $p\neq{1}$, then $X$ has an atom at $u=0$ and since $\rho(t,0)=+\infty$ the integral diverges.
This ends the proof.
\end{proof}

\section{Coupling with tasep}\label{sec coupling}

\subsection{Proof of Theorem \ref{convergence on the azrp}}

First of all, we recall a result that we will use in the sequel:



\begin{theorem} (\cite{F.G.M.,F.K., F.P., M.G.})\label{convergence to uniform of scp of tasep}
\\
Consider the asep starting from $\nu_{\alpha,\beta}$ with $0\leq{\beta}<{\alpha}\leq{1}$. At time zero put a second class
particle at the origin regardless the value of the configuration at this point. Let $Y_{2}(t)$ denote the position of the second class particle at time $t$. Then
$$\lim_{t\rightarrow{+\infty}}\frac{Y_{2}(t)}{t}=\mathcal{U}, \quad\ \textrm{in distribution}$$
where $\mathcal{U}$ is uniformly distributed on $[(p-q)(1-2\alpha),(p-q)(1-2\beta)]$. For $p=1$ the convergence above is almost surely.
\end{theorem}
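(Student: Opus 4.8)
The plan is to run, for the \textbf{asep}, exactly the two–way current computation used in the proof of Theorem \ref{proposition}, and then to invert the resulting identity to read off the distribution of $\mathcal U$. Fix $u$ and let $J^{u}_{t}(\eta)$ denote the net current across the moving line $ut$, defined as in \eqref{def of current through ut} but with the exclusion constraint (at most one particle per site). The key observation is that the shifted measure $\tau_{1}\nu_{\alpha,\beta}$ is the product measure with density $\alpha$ on $(-\infty,0]$ and $\beta$ on $[1,+\infty)$, so it differs from $\nu_{\alpha,\beta}$ only at the origin, where it carries density $\alpha$ instead of $\beta$. I would therefore couple $\eta^{0}\sim\tau_{1}\nu_{\alpha,\beta}$ and $\eta^{1}\sim\nu_{\alpha,\beta}$ monotonically, $\eta^{0}\geq\eta^{1}$, so that with probability $\alpha-\beta$ there is a single discrepancy at the origin and otherwise the configurations agree. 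As recalled in Section \ref{sec coupling and discrepancy}, this discrepancy evolves as a \emph{second class particle}, and in the exclusion dynamics a single such discrepancy is conserved and never annihilates; conditioned on its presence, its trajectory has the same law as the second class particle $Y_{2}(t)$ in the statement (the environment away from the origin being $\nu_{\alpha,\beta}$–distributed, which is why the value at the origin is irrelevant).

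Next I would evaluate $D(t):=E_{\tau_{1}\nu_{\alpha,\beta}}[J^{u}_{t}]-E_{\nu_{\alpha,\beta}}[J^{u}_{t}]$ in two ways. Using the \emph{basic coupling} above, $J^{u}_{t}(\eta^{0})-J^{u}_{t}(\eta^{1})=\mathbf{1}\{\text{discrepancy present}\}\,\mathbf{1}\{Y_{2}(t)\geq ut\}$, so that $D(t)=(\alpha-\beta)\,P(Y_{2}(t)\geq ut)$. Using instead the \emph{particle to particle coupling} with $\eta^{0}=\tau_{1}\eta^{1}$, the same telescoping computation as below \eqref{dif exp res} collapses the current difference to a single boundary term, yielding $D(t)=E_{\nu_{\alpha,\beta}}[\eta_{t}(ut+1)]-E_{\nu_{\alpha,\beta}}[\eta(1)]$, where $E_{\nu_{\alpha,\beta}}[\eta(1)]=\beta$. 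Equating the two expressions gives
\begin{equation*}
(\alpha-\beta)\,P(Y_{2}(t)\geq ut)=E_{\nu_{\alpha,\beta}}[\eta_{t}(ut+1)]-\beta .
\end{equation*}

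Then I would pass to the limit. By convergence to local equilibrium (\cite{A.V.,B.F.}), $E_{\nu_{\alpha,\beta}}[\eta_{t}(ut+1)]\to\rho(1,u)$, where $\rho(t,u)$ is the rarefaction solution \eqref{sol g rate} of the inviscid Burgers equation with flux $F(\rho)=(p-q)\rho(1-\rho)$ and step data $\alpha\mathbf{1}\{u<0\}+\beta\mathbf{1}\{u>0\}$. Hence
\begin{equation*}
\lim_{t\to+\infty}P(Y_{2}(t)\geq ut)=\frac{\rho(1,u)-\beta}{\alpha-\beta},
\end{equation*}
so the distribution function of $\mathcal U=\lim_t Y_2(t)/t$ is $F_{\mathcal U}(u)=\big(\alpha-(F')^{-1}(u)\big)/(\alpha-\beta)$. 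Since for the asep $(F')^{-1}(u)=\tfrac12\big(1-\tfrac{u}{p-q}\big)$ is affine, $F_{\mathcal U}$ is linear in $u$, vanishing at $u=F'(\alpha)=(p-q)(1-2\alpha)$ and equal to $1$ at $u=F'(\beta)=(p-q)(1-2\beta)$; this is exactly the uniform law on $[(p-q)(1-2\alpha),(p-q)(1-2\beta)]$, proving convergence in distribution.

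The main obstacle is twofold. First, the local–equilibrium step must be justified \emph{at the moving microscopic site} $ut+1$ and not merely in the bulk; this is where one genuinely uses the rarefaction structure of \eqref{sol g rate}, and it should be handled as in \cite{A.V.,B.F.}. Second, and harder, is the upgrade to \emph{almost sure} convergence when $p=1$: the argument above only controls one–dimensional marginals. For this refinement I would invoke the last–passage percolation representation of the tasep, in which $Y_{2}(t)$ is the competition interface between two growing clusters (\cite{F.P.}), and deduce the a.s. limit from the superadditive/monotonicity properties of that model, or alternatively appeal directly to the coupling argument of \cite{M.G.}; this is the delicate part of the statement for $p=1$.
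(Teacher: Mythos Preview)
The paper does not actually prove this theorem: it is quoted from the literature, with the sentence after the statement attributing convergence in distribution to \cite{F.K.} (for $p=1$) and \cite{F.G.M.} (for $p\neq 1$), and the almost sure upgrade to \cite{F.P.} and \cite{M.G.}. So there is no ``paper's own proof'' to compare against; the paper's proof of Theorem~\ref{proposition} is itself an adaptation \emph{to the zero-range} of the Ferrari--Kipnis argument from \cite{F.K.}. What you have done is run that adaptation in reverse, and your outline is indeed the classical Ferrari--Kipnis two--coupling computation for the asep: one discrepancy under the basic coupling gives $(\alpha-\beta)P(Y_2(t)\geq ut)$, the translation coupling gives $E[\eta_t(\cdot)]-\beta$, and local equilibrium for the rarefaction fan identifies the limit as $(\rho(1,u)-\beta)/(\alpha-\beta)$, which is affine in $u$ because $F$ is quadratic. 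This is correct and is precisely the route of \cite{F.K.,F.G.M.}.

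Two small corrections. First, your finite--$t$ identity should read $E_{\tau_1\nu_{\alpha,\beta}}[\eta_t(ut+1)]-\beta$ rather than $E_{\nu_{\alpha,\beta}}[\eta_t(ut+1)]-\beta$ (equivalently $E_{\nu_{\alpha,\beta}}[\eta_t(ut)]-\beta$ by translation invariance); the shift of one site is of course irrelevant after taking $t\to\infty$. Second, you correctly flag that the almost sure statement for $p=1$ lies outside this argument; this is exactly why the paper attributes it separately to \cite{F.P.,M.G.}, and your sketch (competition interface / last--passage percolation) is the right pointer rather than a proof.
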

The proof of last result for convergence in distribution and for $p=1$ was given in \cite{F.K.}, while for $p\neq1$ it was given in \cite{F.G.M.}. Under almost sure convergence and for $p=1$, last result was derived in \cite{F.P.} and \cite{M.G.}.
Now we prove Theorem \ref{convergence on the azrp}.

The idea of the proof of Theorem \ref{convergence on the azrp} consists in using the particle to particle coupling between the constant-rate tazrp and the tasep, in such a way that we are able to relate the position of the second class particle on the constant-rate tazrp with some microscopic function on the tasep. Now we
explain the relation between the particles of the two processes.

Start the constant-rate tazrp from a configuration distributed according to $\mu_{\rho,\lambda}$ and add a second class particle at the origin. On the other hand, start the tasep from $\nu_{\alpha,\beta}$, add a second class particle at the origin and remove the first class particle from there (if necessary). We notice that the origin, for both configurations, has a second class particle. Now, on the tasep, initially label the holes by denoting the position of the $i$-th hole at time $0$ by $x_{i}(0)$. To simplify notation, we label the leftmost (resp. rightmost) hole at the right (resp. left) hand side of the second class particle at time $t=0$ by $1$ (resp. $-1$). Both processes are related in such a way that basically on the tasep the distance between two consecutive holes minus one is the number of particles at a site in the tazrp, but near the second class particle one has to be more careful. We define:
 \begin{itemize}
  \item for $i=Y_2(0)-1$: $\xi(i)$ is the number of particles between $Y_2(0)$ and the first hole to its left in the tasep, therefore, $\xi(i)=Y_2(0)-x_{-1}(0)-1$;

\vspace{0.1cm}

  \item for $i=Y_2(0)$: $\xi(i)$ has a second class particle plus a number of first class particles that coincides with the number of first class particles between $Y_2(0)$ and the first hole to its right in the tasep, therefore, $\xi(i)$ has $x_{1}(0)-Y_2(0)-1$ first class particles and a second class particle;

\vspace{0.1cm}

 \item
 for $i\in{\mathbb Z\setminus{\{Y_2(0)-1,Y_2(0)\}}}$: $\xi(i)$ corresponds to the number of particles between consecutive holes, therefore, for $\kappa>0$ and for $ i=Y_2(0)+\kappa$, $\xi(i)=x_{\kappa+1}(0)-x_{\kappa}(0)-1$, similarly for $\kappa<0$;
  \end{itemize}

With the established relations we notice that for a positive site (resp. negative site) if in the  constant-rate tazrp there are $k$ particles at a given site, then for the tasep there are $k$ particles plus a hole to their right (resp. left). For positive ( resp. negative) sites there are $k$ particles at that site with probability $\alpha^k(1-\alpha)$ (resp. $\beta^k(1-\beta))$. For the constant-rate tazrp at the site $X_2(t)$ there are $k$ particles if in the tasep there are $k$ particles plus a hole to the right of the second class particle. By the definition of the invariant measures for the constant-rate tazrp we have that $\alpha=\rho/(1+\rho)$ and $\beta=\lambda/(1+\lambda)$, see Section \ref{invariant measures}.

On the figure below, we represent a possible initial configuration of the constant-rate tazrp and the corresponding configuration in the tasep. In both the tasep and tazrp the second class particle is represented by ${\circledast}$.

\vspace{0.2cm}

\hspace{3mm}
$ \CIRCLE$

\hspace{3mm}
$ \CIRCLE \CIRCLE \CIRCLE$

$\ldots \CIRCLE \CIRCLE{\underline{\circledast}}\CIRCLE \Circle \Circle \CIRCLE \Circle \ldots$ \hspace{5mm}
$\ldots \Circle \CIRCLE \CIRCLE \CIRCLE \Circle \CIRCLE \CIRCLE \underline{\circledast} \CIRCLE \Circle \CIRCLE \Circle \Circle \Circle\CIRCLE \Circle \Circle \ldots$

\vspace{0.15cm}

Now, in the presence of the particle to particle coupling we describe the relation between both dynamics. Suppose to start the constant-rate tazrp from a configuration where the second class particle stands alone at the origin and label the particles in both processes, see the figure below.

\vspace{0.2cm}

\hspace{3mm}
$ \CIRCLE^{-5}$

\hspace{3mm}
$ \CIRCLE^{-4} \CIRCLE^{-2}$

$\ldots \CIRCLE^{-3} \CIRCLE^{-1} \underline{\circledast}^0 \Circle \Circle \CIRCLE^1  \ldots$ \hspace{5mm}
$\ldots\Circle \CIRCLE^{-5} \CIRCLE^{-4} \CIRCLE^{-3} \Circle \CIRCLE^{-2} \CIRCLE^{-1} \underline{\circledast}^0 \Circle \Circle \Circle \CIRCLE^1\Circle\ldots$

\vspace{0.2cm}

Under the particle to particle coupling, particles are attached by the labels and only one configuration looks at the clocks, let us say the configuration in the zero-range process. When the clock rings at a site which is occupied only with first class particles, the jumping particle is the one with lowest label. Therefore, if in the tazrp the clock rings at a site and the $i$-th particle jumps, then the $i$-th particle in the tasep also jumps.

 Now, if the clock at the origin rings, then in the constant-rate tazrp the
second class particle jumps to the site $1$ and this corresponds in the tasep to a jump of the second class particle over the leftmost hole at its right, see the figure below.

\vspace{0.2cm}

\hspace{3mm}
$ \CIRCLE^{-5}$

\hspace{3mm}
$ \CIRCLE^{-4} \CIRCLE^{-2}$

$\ldots \CIRCLE^{-3} \CIRCLE^{-1} \underline{\Circle} {\circledast}^0 \Circle \CIRCLE^1  \ldots$ \hspace{5mm}
$\ldots \Circle \CIRCLE^{-5} \CIRCLE^{-4} \CIRCLE^{-3} \Circle \CIRCLE^{-2} \CIRCLE^{-1} \underline \Circle {\circledast}^0\Circle \Circle \CIRCLE^1\Circle\ldots$

\vspace{0.2cm}

Alternatively, if the clock at the site $-1$ rings, then the first class particle with label $-1$ jumps to the origin and in that case the second class particle does not move. In the tasep, this corresponds to a jump over the second class particle of the rightmost first class particle at its left, see the figure below.

\vspace{0.2cm}

\hspace{3mm}
$\CIRCLE^{-5}$

\hspace{3mm}
$ \CIRCLE^{-4} \hspace{6.5mm} \CIRCLE^{-1}$

$\ldots \CIRCLE^{-3} \CIRCLE^{-2} \underline{\circledast}^0 \Circle \Circle \CIRCLE^1  \ldots$ \hspace{5mm}
$\ldots\Circle \CIRCLE^{-5} \CIRCLE^{-4} \CIRCLE^{-3} \Circle \CIRCLE^{-2} {\circledast}^0 \underline\CIRCLE^{-1} \Circle \Circle \Circle \CIRCLE^1\Circle\ldots$

\vspace{0.2cm}

Now, in the zero-range the second class particle cannot jump since there is a first class particle at its site, neither the second class particle in tasep, since it is trapped between two first class particles, nevertheless the second class particle can jump to the left if the first class particle at its left decides to jump to the right. In last case, this corresponds in the tazrp to a jump of the first class particle at the site $-1$ to the origin.
Under
this relation, the position of second class particle at time $t$ in the constant-rate tazrp, corresponds to the current of holes through the second class particle up to time $t$, in the tasep, namely the number of holes that jump from $Y_2(s)+1$ to $Y_2(s)$ minus the number of holes that jump from $Y_2(s)$ to $Y_2(s)+1$ for $s\in{[0,t]}$, where we recall that for $s\geq{0}$, $Y_2(s)$ denotes the position of the second class particle in tasep at time $s$.

In order to summarize the established relations we introduce some notation. Recall that $X_{2}(t)$ denotes the position of the second class
particle at time $t$ for the constant-rate tazrp starting from $\mu_{\rho,\lambda}$ in which we add a second class particle at the origin. Let $J^{se}_{2}(t)$ (resp. $J_{2}^{zr}(t)$) denote the current of first class particles that cross over the
second class particle at time $t$ in the tasep (resp. tazrp) starting from  $\nu_{\alpha,\beta}$ with a second class particle at the origin (resp. $\mu_{\rho,\lambda}$ in which we add a second class particle at the origin). Let $H_{2}^{se}(t)$  denote the current of holes that cross over the second class particle in the tasep up to the time $t$ starting from $\nu_{\alpha,\beta}$ with a single second class particle at the origin.

From the discussion above and for these initial conditions, we have that $$J_{2}^{se}(t)=J_{2}^{zr}(t)\quad \textrm{and} \quad H_{2}^{se}(t)=X_{2}(t).$$ Now, the proof ends by recalling the following strong L.L.N. from \cite{F.M.P.}
\begin{equation*}
\lim_{t\rightarrow{+\infty}}\frac{J_{2}^{se}(t)}{t}=\Big(\frac{1-\mathcal{U}}{2}\Big)^2, \quad\ \textrm{almost  surely},
\end{equation*}
\begin{equation*} \label{zerosconvergence}
\lim_{t\rightarrow{+\infty}}\frac{H_{2}^{se}(t)}{t}=\Big(\frac{1+\mathcal{U}}{2}\Big)^2,  \quad\ \textrm{almost  surely},
\end{equation*}
where $\mathcal{U}$ is a Uniform random variable on $[(1-2\alpha),(1-2\beta)]$, where $\alpha=\rho/(1+\rho)$ and $\beta=\lambda/(1+\lambda)$. This finishes the proof.







\subsection{Crossing probabilities}\label{crossing prob}

In this section, by coupling the constant-rate tazrp with the tasep, we obtain the following result.
\begin{corollary}
Consider the constant-rate tazrp starting from $\mu_{\infty,0}$. At the initial time add a second (resp. third) class particle at the origin (resp. at site $1$) and denote by $X_{2}(t)$ ($X_{3}(t)$) its position at time $t$. Then
\begin{equation*}
\lim_{t\rightarrow{+\infty}}P(X_{2}(t)\geq{X_{3}(t)})=\frac{2}{3}.
\end{equation*}
\end{corollary}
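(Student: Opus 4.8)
The plan is to deduce the statement from the corresponding result for the tasep by means of the particle-to-particle coupling already used in the proof of Theorem~\ref{convergence on the azrp}, now run in the presence of a second and a third class particle. First I would set up the coupling: start the tasep from $\nu_{1,0}$, add a second class particle at the origin and a third class particle at the neighbouring site dictated by the hole-distance encoding (deleting the first class particle there if needed), and read that tasep configuration through the encoding of the proof of Theorem~\ref{convergence on the azrp} — the number of particles at a tazrp site being one less than the gap between the corresponding consecutive holes, with the usual modification around the marked particles. Its image is, up to a rearrangement of finitely many sites near the origin, exactly the constant-rate tazrp started from $\mu_{\infty,0}$ with a second class particle at the origin and a third class particle at site~$1$. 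Under the particle-to-particle rule only the tazrp configuration consults the clocks and labelled particles move together; the priorities match, since in both models a second class particle treats the third as a hole and the third treats the second as a particle, so the coupling should intertwine the two dynamics, including the motion of the third class particle.

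Next I would identify the relevant event. Under the coupling the second (resp. third) class particle of the tazrp is matched with the second (resp. third) class particle of the tasep, and the encoding gives, for every $t$, the identity $\{X_2(t)>X_3(t)\}=\{Y_2(t)>Y_3(t)\}$: the two marked particles of the tasep always occupy distinct sites, and $Y_2(t)>Y_3(t)$ precisely when the second class particle of the tazrp sits strictly to the right of the third. The remaining event $\{X_2(t)=X_3(t)\}$ corresponds to the two marked particles of the tasep being adjacent with no hole between them; since they separate at linear speed in the fan (as follows from the joint asymptotics in \cite{F.G.M.}), $P(X_2(t)=X_3(t))\to 0$, so that $\lim_{t\to+\infty}P(X_2(t)\ge X_3(t))=\lim_{t\to+\infty}P(Y_2(t)>Y_3(t))$.

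It then remains to invoke Theorem~2.3~a) of \cite{F.G.M.}, which gives, for the tasep started from $\nu_{1,0}$ with a second class particle at the origin and a third class particle at site~$1$, that $\lim_{t\to+\infty}P(Y_2(t)>Y_3(t))=2/3$; the finitely many sites by which our tasep initial condition differs from that one do not affect the limit. This completes the proof.

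The step I expect to be the main obstacle is the first one. As noted in the Introduction, the proof of Theorem~2.3~a) in \cite{F.G.M.} rests on the conservation of the number of holes in the exclusion dynamics, a property with no zero-range analogue, so one genuinely has to go through the coupling; the delicate point is to check that the hole-distance encoding, designed for a single second class particle, still intertwines the two dynamics once a third class particle is present — i.e. that each jump of the third class particle in the tazrp is mirrored by the correct move of the tasep third class particle, and that the relative order of the second and third class particles is updated consistently in both pictures, including in the transient configurations where the two marked particles share a tazrp site. Once this intertwining is established, the rest is the translation of the event together with the citation.
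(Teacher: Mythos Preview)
Your approach is the paper's: couple the constant-rate tazrp with the tasep via the hole-distance encoding, now carrying both a second and a third class particle, and then invoke Theorem~2.3~a) of \cite{F.G.M.}. The paper sets up exactly this coupling (with the tasep started from $\nu_{1,0}$ and marked particles at $0$ and $1$, so there is in fact no discrepancy at all with the initial data of \cite{F.G.M.}) and verifies on a couple of representative jumps that the intertwining respects the second/third class hierarchy.

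Where you diverge is in the bookkeeping of the event, and there you make things harder than necessary. Your pointwise identity $\{X_2(t)>X_3(t)\}=\{Y_2(t)>Y_3(t)\}$ is not quite right: at the first instant the second class particle reaches the third in the tazrp one has $X_2=X_3$ (they share the site), while in the tasep the second class particle has just swapped past the third, so already $Y_2>Y_3$; correspondingly, your description of $\{X_2=X_3\}$ as ``adjacent in tasep with no hole between'' is off. The paper avoids this --- and your separate $P(X_2=X_3)\to 0$ argument, and any need to extend the encoding past the meeting time --- by the simpler observation that the coupling need only be maintained \emph{up to} the overtaking time. Since jumps are totally asymmetric, both $\{X_2(t)\ge X_3(t)\}$ in the tazrp and $\{Y_2(t)>Y_3(t)\}$ in the tasep are absorbing; the coupling shows they are triggered at the same stopping time, so $P(X_2(t)\ge X_3(t))=P(Y_2(t)>Y_3(t))$ for every $t$, and Theorem~2.3~a) of \cite{F.G.M.} gives the limit directly. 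This is exactly the ``main obstacle'' you anticipated, but resolved without ever running the encoding through the configurations where the two marked particles share a tazrp site.
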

\begin{proof}
The proof is a consequence of the coupling between the constant-rate tazrp and the tasep introduced above and of Theorem 2.3 a) in \cite{F.G.M.}.

Suppose to start the tasep from $\nu_{1,0}$ in which we add a second (resp. third) class particle at the origin (resp. at site $1$). Denote the position of the second (resp. third) class particle at time $t$ by $Y_2(t)$ (resp. $Y_3(t)$). As we have done above, on the tasep, initially we label the holes by denoting the position of the $i$-th hole at time $0$ by $x_{i}(0)$. To simplify notation, we label the leftmost (resp. rightmost) hole at the right (resp. left) hand side of the second class particle at time $t=0$ by $1$ (resp. $-1$).

As described above, up to the time that the second class particle overtakes the third class particle in tasep, the relation between the constant-rate tazrp and the tasep is defined as follows.
 \begin{itemize}
\item for $i=Y_2(0)-1$: $\xi(i)$ is the number of particles between $Y_2(t)$ and the first hole to its left in the tasep, therefore, $\xi(i)=Y_2(0)-x_{-1}(0)-1$;

\vspace{0.1cm}

  \item for $i=Y_2(0)$: $\xi(i)$ has a second class particle plus a number of first class particles that coincides with the number of first class particles between $Y_2(0)$ and $Y_3(0)$.

      \vspace{0.1cm}

 \item for $i=Y_3(0)$: $\xi(i)$ has a third class particle plus a number of first class particles that coincides with the number of first class particles between $Y_3(0)$ and the first hole to its right in the tasep, therefore, $\xi(i)$ has $x_{1}(0)-Y_3(0)-1$ first class particles and a third class particle;

     \vspace{0.1cm}

     \item  for $i\in{\mathbb Z\setminus{\{Y_2(t)-1,Y_2(t), Y_3(t)\}}}$: $\xi(i)$ corresponds to the number of particles between consecutive holes in tasep, therefore, for $\kappa>0$ and for $ i=Y_3(0)+\kappa$, $\xi(i)=x_{\kappa+1}(0)-x_{\kappa}(0)-1$, similarly for $\kappa<0$;

\end{itemize}
  On the figure below, we represent the initial configuration of the zero-range and the corresponding configuration in the tasep. Both in the exclusion and zero-range, the second and third class particles are represented by ${\circledast}$.

\vspace{0.1cm}

\hspace{0.5cm}\vdots

\hspace{0.3cm}
$\CIRCLE$

\hspace{0.3cm}
$\CIRCLE$

$\ldots\CIRCLE \underline{\circledast}\circledast  \Circle \Circle \Circle\ldots$ \hspace{2cm}
$\ldots \CIRCLE \CIRCLE \CIRCLE \CIRCLE \underline{\circledast} {\circledast}  \Circle \Circle \Circle \Circle\ldots$

\vspace{0.2cm}

 Recall that we use the particle to particle coupling, so that we have to label the particles in both processes but only one of the configurations looks at the clocks, namely the configuration in the zero-range process. Therefore we have:

 \vspace{0.1cm}

\hspace{0.5cm}\vdots

\hspace{0.3cm}
$\CIRCLE^{-3}$

\hspace{0.3cm}
$\CIRCLE^{-2}$

$\ldots\CIRCLE^{-1} \underline{\circledast}^0\circledast^1  \Circle \Circle \Circle\ldots$ \hspace{2cm}
$\ldots \CIRCLE^{-3} \CIRCLE^{-2} \CIRCLE^{-1}  \underline{\circledast}^0 {\circledast}^1  \Circle \Circle \Circle \Circle\ldots$

\vspace{0.2cm}

 Now, suppose that the clock at the site $-1$ rings. Then, in the constant-rate tazrp the first class particle with label $-1$ jumps to the origin. In the tasep, this corresponds to a jump of the rightmost first class particle over the second class particle:

 \vspace{0.1cm}

\hspace{0.5cm}\vdots

\hspace{0.3cm}
$\CIRCLE^{-4}$

\hspace{0.3cm}
$\CIRCLE^{-3}\CIRCLE^{-1}$

$\ldots\CIRCLE^{-2} \underline{\circledast}^0\circledast^1  \Circle \Circle \Circle\ldots$ \hspace{2cm}
$\ldots \CIRCLE^{-3} \CIRCLE^{-2} {\circledast}^0  \underline{\CIRCLE}^{-1} {\circledast}^1  \Circle \Circle \Circle \Circle\ldots$

\vspace{0.2cm}

 Another possible jump in the constant-rate tazrp is a jump of the third class particle to the right. This happens if the clock at the site $1$ rings. In the tasep, this corresponds to a jump of the third class particle over the leftmost hole:

 \vspace{0.1cm}

\hspace{0.5cm}\vdots

\hspace{0.3cm}
$\CIRCLE^{-3}$

\hspace{0.3cm}
$\CIRCLE^{-2}$

$\ldots\CIRCLE^{-1} \underline{\circledast}^0  \Circle \circledast^1 \Circle \Circle\ldots$ \hspace{2cm}
$\ldots \CIRCLE^{-3} \CIRCLE^{-2} \CIRCLE^{-1}  \underline{\circledast}^0  \Circle {\circledast}^1 \Circle \Circle \Circle\ldots$

\vspace{0.2cm}

  So in this case, until the time that the second class particle overtakes the third class particle, that is they share the same site in the constant-rate tazrp, the dynamics of these particles  corresponds to the dynamics of the second and third class particles in the tasep. Therefore, the claim above follows straightforwardly from Theorem 2.3 a) of \cite{F.G.M.}.

\end{proof}

\noindent{\bf Acknowledgments.}\\
I thank the warm hospitality of ``IMPA'' and ``Courant Institute of Mathematical Sciences'', where part of this work was done. I express my gratitude to FCT for the research project PTDC/MAT/109844/2009 and to the Research Centre of Mathematics of the University of Minho, for the financial support provided by "FEDER" through the "Programa Operacional Factores de Competitividade "COMPETE" and by FCT through the research project PEst-C/MAT/UI0013/2011. I thank Pablo Ferrari and Milton Jara for nice discussions on the subject. I also thank the comments of Gideon Amir and James Martin during the conference "Developments in Coupling Workshop" held in York, UK in September 2012, that contributed to an improvement of the paper.

\end{document}